\newtheorem{theorem}{Theorem}
\newtheorem{corollary}[theorem]{Corollary}
\newtheorem{example}[theorem]{Example}
\newtheorem{lemma}[theorem]{Lemma}
\newtheorem{proposition}[theorem]{Proposition}
\newtheorem{remark}[theorem]{Remark}
\newenvironment{proof}[1][Proof]{\noindent \textbf{#1.} }{\  \rule{0.5em}{0.5em}}
\journal{xxxxxxxxxxxxxxx}
\begin{document}
\begin{frontmatter}
\title{Lower bounds for Laplacian spread and relations with invariant parameters revisited}

\author{Enide Andrade}

\address{CIDMA-Center for Research and Development in Mathematics and Applications
         Departamento de Matem\'atica, Universidade de Aveiro, 3810-193, Aveiro, Portugal.}

\author{Maria Aguieiras A. de Freitas}

\address{Instituto de Matem\'{a}tica and COPPE/Produ\c{c}\~{a}o\\
 Universidade Federal do Rio de Janeiro, Rio de Janeiro, Brasil.}

\author{Mar\'{\i}a Robbiano, Jonnathan Rodr\'{\i}guez}

\address{Departamento de Matem\'{a}ticas, Universidad Cat\'{o}lica del Norte
         Av. Angamos 0610 Antofagasta, Chile.}

\begin{abstract}

Let $G=\left( V\left( G\right) ,E\left( G\right) \right) $ be an $\left(
n,m\right) $-graph and $X$ a nonempty proper subset of $V\left( G\right) $.
Let $X^{c}=V\left( G\right) \backslash X$.\ The \emph{edge density} of $X$ in
$G$ is given by
\begin{equation*}
\rho _{G}\left( X\right) =\frac{n\left\vert E_{X}\left( G\right) \right\vert
}{\left\vert X\right\vert \left\vert X^{c}\right\vert },
\end{equation*}%
where $E_{X}\left( G\right) \ $ is the set of edges in $G$ with one end in $%
X $ and the other in $X^{c}$. The Laplacian spread of a graph is the difference between the greatest Laplacian eigenvalue and the algebraic connectivity. In this paper,  we use the edge density of some nonempty
proper subsets of vertices in $G$ to establish new lower bounds for the Laplacian spread.
Also, using some known numerical inequalities some lower bounds for the Laplacian spread of a graph with a prescribed degree sequence are presented.
\end{abstract}

\begin{keyword}
Spectral graph theory \sep matrix spread \sep Laplacian spread.

\MSC 05C50 \sep 15A18

\end{keyword}

\end{frontmatter}

\section{Introduction and Motivation}

By an $(n,m)$-graph $G$ we mean an undirected simple graph with a vertex set
$V\left( G\right) $ of cardinality $n$ and an edge set $E\left( G\right) $
of cardinality $m$. If $e\in E(G)$ has end vertices $u$ and $v$ then they are neighbors and the edge is denoted by $uv$. Sometimes, if we label the vertices of $G$, we denote an edge by the labels of its end vertices, that is, if $e=v_i v_j$, then it can be simply denoted as $e= ij.$  A graph with no
edges (but at least one vertex) is called \emph{empty graph}. For $u\in V(G)$, the number of vertices adjacent to $u$ is denoted by $d\left( u\right) $,
and it is called the vertex degree of $u$. A \emph{pendent vertex}
is a vertex with degree one. The minimum and maximum vertex
degree of $G$ are denoted by $\delta (G)$  and $\Delta (G) $, respectively
(or simply $\delta$ and $\Delta $, respectively). A $q$-regular graph $G$ is a graph where every
vertex has degree $q$. The complete graph of order $n$ is an $\left(
n-1\right) $-regular graph with $n$ vertices and it is denoted by $K_{n}$.  The cycle with $n$ vertices is denoted by $C_{n}$.

Now, some interesting subgraphs of a graph are defined.
By a non trivial subset of vertices of $G$ we mean a nonempty proper subset of $V\left( G\right) $ and we will denote the induced subgraph with a non trivial vertex set $S\subset V\left( G\right) $ by $
\left\langle S\right\rangle|_{G} $. If $S\subseteq V\left( G\right) $ then $S^{c}$ is the set $V\left( G\right) \backslash S$.
A set of vertices that induces an
empty subgraph is called an \emph{independent set}. A \emph{matching} $N$ in a graph $G$ is a nonempty set of edges such that no
two have a vertex in common. The size of a matching is the number of edges
in it. A matching $N$ of $G$ is a \emph{dominating induced matching\ (DIM)}
of $G$ if every edge of $G$ is either in $N$ or has a common end vertex
with\ exactly one edge in $N$. A DIM is also called an \emph{efficient edge
domination set} (see for instance \cite{grinstead_et_al93}).\ It is important to notice that if $N$ is a DIM of $G$, then
there is a partition of $V(G)$ into two\ disjoint subsets $V(N)$ and $R$,
where $R$ is an independent set. Some literature concerning dominating induced matchings can be seen for instance in \cite{BM11, CL09, K09, LMRS}.

The complement, $\overline{G}$ of a graph $
G $ has the same vertex set as $G$, where vertices $u$ and $v$ are adjacent
in $\overline{G}$ if and only if they are non adjacent in $G$.

\noindent On the other hand, a non trivial vertex set $S\subseteq V(G)$ is $(\kappa
,\tau )$-regular set if $\left\langle S\right\rangle|_{G}$ is a $\kappa $-regular
subgraph in $G$ such that every vertex in $S^{c}$ has $\tau $ neighbors in $%
S $. This definition appeared at first place in \cite{Thompson} under the designation of eigengraphs and in \cite{Neumaier}, in the context of strongly regular graphs and designs.
More literature concerning this concept can be found for instance in \cite{Rama, CSZ2008}.

Considering two vertex disjoint graphs $G_{1}$ and $G_{2}$, the join of
$G_{1}$ and $G_{2}$ is the graph $G_{1}\vee G_{2}$ such that $V(G_{1}\vee
G_{2})=V(G_{1})\cup V(G_{2})$ and $E(G_{1}\vee G_{2})=E(G_{1})\cup
E(G_{2})\cup \{uv:u\in V(G_{1})$ and $v\in V(G_{2})\}$.

For a real symmetric matrix $M$, let us denote by $\theta _{i}\left(
M\right) $ the $i$-th largest eigenvalue of $M$. For the spectrum of a matrix $M$
(the multiset of the eigenvalues of $M$) we use the notation $\sigma (M)$. If $\theta $ is an eigenvalue of $M$ and $\mathbf{x}$ one of its eigenvectors, then the pair $(\theta ,\mathbf{x})$ is
an eigenpair of $M$.

As usual we denote the adjacency matrix of $G$ by $A(G)$ and the vertex degree matrix, $D(G)$ is the diagonal matrix
where its $v$-th diagonal entry is the degree of vertex $v,$ for $v \in V(G).$
From Ger\v{s}gorin Theorem, see \cite{varga}, the
matrix $L\left( G\right) =D\left( G\right) -A\left( G\right) $ is positive
semidefinite. This matrix is called the \emph{Laplacian matrix} of $G$  and its spectrum, $\sigma (L(G))$ is
called \emph{Laplacian
spectrum} of $G$. Let $\mathbf{e}$ be the
all ones vector, then $L\left( G\right) \mathbf{e}$ equals the null vector,
thus $0$ is always a Laplacian eigenvalue and its multiplicity is the number
of components of $G$.

The Laplacian eigenvalues, $\mu _{1}\geq \dots \geq \mu _{n-1}\geq \mu
_{n}=0 $ of $G$ and $\overline{\mu }_{1}\geq \dots \geq \overline{\mu }
_{n-1}\geq \overline{\mu }_{n}=0$ of $\overline{G}$ are related by
\begin{equation}
\overline{\mu }_{j}=n-\mu _{n-j},  \label{important}
\end{equation}%
for$\ 1\leq j\leq n-1.$ An important result in graph theory, see \cite
{Merris}, states that if $G$ is a graph with $n \geq 2$ vertices and has at least
one edge,$\ \Delta +1\leq \mu _{1}$, and for a connected graph the equality occurs if and only if $
\Delta =n-1$.
The \emph{algebraic connectivity} of a graph $G$ is defined as the second smallest Laplacian eigenvalue $\mu
_{n-1}$, \cite{Fiedler}, and it is sometimes denoted by \emph{a}$\left( G\right). $
Some important properties of this Laplacian eigenvalue can be found in \cite{Fiedler}. For instance, it is worth to recall that $G$ is connected if and only if $ \mu_{n-1}>0.$
There are some written surveys about algebraic connectivity, see \cite{Merris, Merris2, Mohar} and for applications see for instance,
\cite{Perron, lapl2}.

It is known, see \cite{Fiedler}, that if $G$ is a non complete graph, then
\textit{a}$\left( G\right) \leq \kappa _{0}(G)$, where $\kappa _{0}(G)$
denotes the vertex connectivity of $G$ (that is, the minimum number of
vertices whose removal yields a disconnected graph). Since, $\kappa
_{0}\left( G\right) \leq \delta (G)$, it follows that \textit{a}$\left(
G\right) \leq \delta(G) .$ The graphs for which the algebraic connectivity
attains the vertex connectivity are characterized in \cite{kirkland_et_al_02}%
.\

We introduce now  the \textit{edge density} of a
nonempty proper subset of vertices in $ G$. This concept appeared in \cite{Fallat_Kirk} although there was a previous definition given by Mohar in \cite{Mohar} that did not include the factor $n$ in his definition but used in
\cite{Fallat_Kirk} for author's purposes.

Let $G$ be an $\left( n,m\right) $-graph and $X$ a non trivial subset of
vertices of $G$.\ The \emph{edge density} of $X$ in $G$ is given by
\begin{equation*}
\rho _{G}\left( X\right) =\frac{n\left\vert E_{X}\left( G\right) \right\vert
}{\left\vert X\right\vert \left\vert X^{c}\right\vert }
\end{equation*}%
where $E_{X}\left( G\right) $ is the set of edges of $G$ with one end in $X$
and the other in $X^{c}.\ $Note that $\rho _{G}\left( X\right) =\rho
_{G}\left( X^{c}\right) $.\

The next two concepts are also important in this work.
The \emph{spread} of an $n\times n$ complex Hermitian matrix $M$
with eigenvalues $\theta _{1},\ldots ,\theta _{n}$ is defined by
\begin{equation*}
S\left( M\right) =\max_{i,j}\left\vert \theta _{i}-\theta _{j}\right\vert ,
\end{equation*}%
where the maximum is taken over all pairs of eigenvalues of $M$. There are
several papers devoted to this matrix parameter, see for instance \cite%
{Jonhson et al, Mirsky}. Attending to previous concept and in view that the smallest
Laplacian eigenvalue is zero, a nontrivial definition for the Laplacian
spread of a graph $G$ is the difference among the largest and the
second smallest Laplacian eigenvalue of $G$, see \cite{Fan}. So, the Laplacian spread,
denoted by $S_{L}\left( G\right) ,$ is given by%
\begin{equation*}
S_{L}\left( G\right) =\mu _{1}-a\left( G\right) .
\end{equation*}%
Note that if $G$ is not connected then $S_{L}(G)=\mu_{1}.$ Moreover,
since $\kappa _{0}(G)\leq \delta (G)$, it follows that $\mathit{a}\left(
G\right) \leq \delta (G)$ and then
\begin{equation}
S_{L}(G)\geq \Delta (G)+1-\delta\left( G\right),
\label{basic_lower_bound}
\end{equation}%
see e. g. \cite{Liu}. It is also immediate from (\ref{important}) that the
Laplacian spread of $G$ and $\overline{G}$ coincide. In consequence, if $\overline{G}$ is not connected,
then  $S_{L}(G) =S_{L}(\overline{G})=n-a(G).$

Recently, in \cite{Andrade}, by combining old techniques of interlacing eigenvalues and rank $1$ perturbation matrices, some lower bounds on the Laplacian spread were given.  Some of these bounds involve invariant parameters of graphs,
as it is the case of the bandwidth, independence number and vertex connectivity.

In this work, considering the referred work in  \cite{Andrade} as motivation, we search for new lower bounds on the Laplacian spread using the edge density of some nonempty proper subsets of vertices in $G$.  Firstly, we are concerned with those graphs
which contain special subgraphs such as empty subgraphs, DIM subgraphs or
special subset of vertices such as $\left( \kappa ,\tau \right) $-regular
subset of vertices. Then, using some known numerical inequalities we present some lower bounds for the Laplacian spread of a graph with a prescribed degree sequence.

The paper is organized in 4 sections. After Introduction, at Section 2, we present a known inequality due to Mohar, \cite{Mohar}, that relates the algebraic connectivity and the edge density. This result gives  a tool to establish some lower bounds for the Laplacian spread of a connected graph that are related with the edge density. Then, using these results we study lower bounds for the Laplacian spread of graphs that have a particular nontrivial subset of vertices, namely for graphs that have an independent nontrivial subset of vertices and a $(\kappa, \tau)$-regular subset of vertices. At Section 3, using some known numerical inequalities, additional lower bounds for the Laplacian spread of graphs are presented, this time using the degree sequence and another invariant parameter of the graph which depends of its Laplacian spectrum. Finally, in the last section we study an example in order to approach the Laplacian spread of a given graph, using  different parameters, such as algebraic connectivity, edge density of a set $X$  and minimum vertex degree. Some relations are observed.

\section{Edge density of a subset: relations with Laplacian spread}

In \cite{Fallat_Kirk}, the following inequality (previously presented in \cite{Mohar}), that relates the algebraic connectivity and the edge density was proved. The authors also studied the graphs $G$ for which the equality holds.

\begin{proposition}
\label{Lem1} \cite{Fallat_Kirk} Let $G$ be an $\left( n,m\right) $-graph. For
a non trivial subset $X$ of $V$, the edge density of $X$ satisfies%
\begin{equation*}
\label{firstineq}
\mathit{a}\left( G\right) \leq \rho _{G}\left( X\right) \leq \mu_1(G).
\end{equation*}%
Moreover, if a graph $G$ satisfies one of the equalities for some cut $%
E_{X}\left( G\right) $ (set of edges in $G$ whose removal yields a
disconnected graph), then there are integers $s$ and $t$ such that the
following conditions must hold:

\begin{enumerate}
\item Each vertex in $X$ is adjacent to $s$ vertices in $X^{c}$ and
each vertex in $X^{c}$ is adjacent to $t$ vertices in $X$, and

\item $s\left\vert X\right\vert =t\left\vert X^{c}\right\vert .$
\end{enumerate}
\end{proposition}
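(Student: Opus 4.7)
My plan is to prove both inequalities by exhibiting a single, explicit test vector and evaluating its Rayleigh quotient against $L(G)$, then to read off the equality conditions from the eigenvector equation for that vector.

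For the two-sided bound, I would define the partition vector $\mathbf{x}\in\mathbb{R}^{n}$ by $x_{v}=|X^{c}|$ for $v\in X$ and $x_{v}=-|X|$ for $v\in X^{c}$. A direct count gives $\mathbf{e}^{T}\mathbf{x}=|X|\,|X^{c}|-|X^{c}|\,|X|=0$, so $\mathbf{x}\perp\mathbf{e}$, and $\mathbf{x}^{T}\mathbf{x}=|X|\,|X^{c}|^{2}+|X^{c}|\,|X|^{2}=n\,|X|\,|X^{c}|$. Using the standard identity $\mathbf{x}^{T}L(G)\mathbf{x}=\sum_{ij\in E(G)}(x_{i}-x_{j})^{2}$, I observe that only the edges of the cut $E_{X}(G)$ contribute, each giving $(|X^{c}|+|X|)^{2}=n^{2}$, so $\mathbf{x}^{T}L(G)\mathbf{x}=n^{2}|E_{X}(G)|$. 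Dividing, the Rayleigh quotient equals
\begin{equation*}
\frac{n^{2}|E_{X}(G)|}{n\,|X|\,|X^{c}|}=\rho_{G}(X).
\end{equation*}
The Courant--Fischer characterizations $\mathit{a}(G)=\min_{\mathbf{y}\perp\mathbf{e}}\mathbf{y}^{T}L(G)\mathbf{y}/\mathbf{y}^{T}\mathbf{y}$ and $\mu_{1}(G)=\max_{\mathbf{y}\neq 0}\mathbf{y}^{T}L(G)\mathbf{y}/\mathbf{y}^{T}\mathbf{y}$ then yield the desired sandwich $\mathit{a}(G)\leq \rho_{G}(X)\leq \mu_{1}(G)$.

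For the equality cases, I would exploit the fact that equality in either Courant--Fischer bound forces $\mathbf{x}$ to be an eigenvector of $L(G)$ for the corresponding eigenvalue $\mu\in\{\mathit{a}(G),\mu_{1}(G)\}$. Writing the eigenvalue equation $(L(G)\mathbf{x})_{v}=\mu\, x_{v}$ row by row, for $v\in X$ with $s_{v}$ neighbors in $X^{c}$ I would obtain $(L(G)\mathbf{x})_{v}=s_{v}n$, and hence $s_{v}=\mu|X^{c}|/n$; since the right-hand side is independent of $v$, every vertex in $X$ has the same number $s:=\mu|X^{c}|/n$ of neighbors in $X^{c}$. Symmetrically, for $v\in X^{c}$ with $t_{v}$ neighbors in $X$ the same computation gives $(L(G)\mathbf{x})_{v}=-t_{v}n$, so $t_{v}=\mu|X|/n=:t$ is constant, establishing condition~1. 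Condition~2 then follows immediately from double counting the edges of the cut: $s|X|=|E_{X}(G)|=t|X^{c}|$.

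The main obstacle, conceptually, is just the clean choice of test vector; everything else reduces to straightforward Rayleigh-quotient arithmetic and a row-wise reading of $L(G)\mathbf{x}=\mu\mathbf{x}$. The hypothesis that $E_{X}(G)$ is a cut whose removal disconnects $G$ does not enter the two inequalities themselves (those hold for any nontrivial $X$), but it is invoked in the equality discussion of \cite{Fallat_Kirk} to ensure that the constants $s,t$ derived above are genuinely combinatorial and match the cut structure, so I would mention it only when transcribing the structural conclusions.
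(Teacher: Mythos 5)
Your proposal is correct and complete: the signed partition vector $x_{v}=|X^{c}|$ on $X$, $x_{v}=-|X|$ on $X^{c}$ is orthogonal to $\mathbf{e}$, its Rayleigh quotient is exactly $\rho_{G}(X)$, and Courant--Fischer gives the sandwich, while the row-by-row reading of $L(G)\mathbf{x}=\mu\mathbf{x}$ in the equality case correctly forces $d_{X^{c}}(v)=\mu|X^{c}|/n$ for all $v\in X$ and $d_{X}(v)=\mu|X|/n$ for all $v\in X^{c}$, with condition~2 following by double counting the cut. Note that the paper itself offers no proof to compare against --- the proposition is quoted from \cite{Fallat_Kirk} with only a one-line remark explaining condition~2 via the block structure of $A(G)$ --- and your Rayleigh-quotient argument is the standard one used in that reference (and in Mohar's survey), so there is nothing to fault; your closing observation that the ``cut'' hypothesis plays no role in either the inequalities or the derivation of $s$ and $t$ is also accurate.
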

Note that, the condition 2. results from the fact that in $A(G)$,
after labeling the vertices of $X$,  the matrix $A(G)$ is partitioned into blocks and the blocks corresponding to the positions $(1,2)$ and $(2,1)$ have the same number of entries equal to $1$.

\begin{remark}
There are graphs $G$ with
subset of vertices $X$ such that $\rho _{G}\left( X\right) =\mathit{a}\left( G\right) $. Related with this fact, in \cite{Fallat_Kirk} the following
example was given: For a graph $G=\left( V,E\right) $ where $V=X\cup X^{c}$,
with $X$ and $X^{c}$ having $n_{1}$ and $n_{2}$ vertices,
respectively and satisfying $1.$ and $2.$ of Proposition \ref{Lem1} and such that
the induced subgraph $\left\langle X\right\rangle|_{G} =K_{n_{1}}$ and $
\left\langle X^{c}\right\rangle |_{G}=K_{n_{2}}$, then \textrm{a}$\left(
G\right) =\rho _{G}\left( X\right) .$
\end{remark}

As an immediate consequence of Proposition \ref{Lem1}, the next result gives a lower bound for $S_{L}(G)$ in function of edge density of a non trivial subset of the vertex set of $G$.

\begin{corollary}
\label{Theorem1}Let $G=\left( V,E\right) $ be a connected graph of order $n.$ Let $X$
a non trivial subset of $V$
and consider $\phi $ such that $a(G) \leq \phi \leq \mu _{1}.$ Then%
\begin{equation}
\label{first1}
S_{L}\left( G\right) \geq \left\vert \phi -\rho _{G}\left( X\right)
\right\vert .
\end{equation}%
If the equality holds then the conditions 1. and 2. of Proposition \ref{Lem1} hold for $X .$
Moreover, $\left\{ \rho _{G}\left(
X\right) ,\phi \right\} =\left\{ \mu _{1},\mathit{a}\left( G\right)
\right\} .$
\end{corollary}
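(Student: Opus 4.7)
The plan is to treat this as a short consequence of Proposition \ref{Lem1} together with elementary interval arithmetic. By Proposition \ref{Lem1}, the edge density $\rho_G(X)$ lies in the closed interval $[a(G),\mu_1(G)]$, and by hypothesis $\phi$ lies in that same interval. Since both numbers belong to an interval of length $\mu_1(G)-a(G)=S_L(G)$, their distance cannot exceed $S_L(G)$, which is exactly the inequality $(\ref{first1})$.

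For the equality case I would argue as follows. If $|\phi-\rho_G(X)|=S_L(G)=\mu_1(G)-a(G)$, then two numbers inside $[a(G),\mu_1(G)]$ are exactly the full length of the interval apart; this forces one of them to equal the left endpoint $a(G)$ and the other to equal the right endpoint $\mu_1(G)$. Hence $\{\rho_G(X),\phi\}=\{\mu_1(G),a(G)\}$, which gives the second conclusion. In particular $\rho_G(X)\in\{a(G),\mu_1(G)\}$, so the edge density realizes one of the two equalities in Proposition \ref{Lem1}. Applying the characterization given in that proposition to the cut $E_X(G)$ then yields conditions 1. and 2.

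The only step that needs a small check is that the hypothesis of the equality part of Proposition \ref{Lem1} (that $E_X(G)$ be a cut in the sense that removing it disconnects $G$) is satisfied automatically in our setting. This is essentially by definition: since $X$ is a non trivial subset of $V(G)$, removing the edges of $E_X(G)$ from $G$ separates the vertices of $X$ from those of $X^c$, so $E_X(G)$ is indeed an edge cut of $G$. With this observation, the invocation of Proposition \ref{Lem1} is legitimate and the proof is complete.

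I do not anticipate any real obstacle: the whole argument is a one-line interval inequality plus a direct appeal to the equality characterization of the previous proposition. The only place to be slightly careful is the remark in the previous paragraph about $E_X(G)$ being a cut, so that the assumptions of Proposition \ref{Lem1} are met when transferring equality.
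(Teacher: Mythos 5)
Your proposal is correct and is precisely the argument the paper intends: it presents the corollary as an immediate consequence of Proposition \ref{Lem1} (both $\phi$ and $\rho_G(X)$ lie in the interval $[a(G),\mu_1]$ of length $S_L(G)$, and equality forces them to be the endpoints, after which the characterization in the proposition applies). Your extra remark that $E_X(G)$ is automatically an edge cut for a non trivial $X$ is a sound and worthwhile check that the paper leaves implicit.
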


\begin{remark}
Let $G=G_{1}\vee G_{2}$, let us denote by $n_{i}$ and $\mu _{j}\left(
G_{i}\right) $ the order and the $j$-$th$ largest eigenvalue of $G_{i},$ $i\in \{1,2\}$, respectively%
.\ Then, taking into account  \cite{ACMR}, \newline
${\small \sigma }_{L}\left( {\small G}\right) {\small =}\left\{ {\small 0,n}%
_{1}{\small +n}_{2}{\small ,n}_{2}{\small +\mu }_{j}\left( {\small G}%
_{1}\right) {\small ,n}_{1}{\small +\mu }_{i}\left( {\small G}_{2}\right)
{\small :1\leq j\leq n}_{1}{\small -1,}\text{\thinspace }{\small 1\leq i\leq
n}_{2}{\small -1}\right\} $, \newline
Taking $\phi =\mathit{a}\left( G\right) =\min \left\{ n_{2}+\mathit{a}%
\left( G_{1}\right) ,n_{1}+\mathit{a}\left( G_{2}\right) \right\} $ and
as $\rho _{G}\left( V\left( G_{1}\right) \right) =n_{1}+n_{2}$, with this choosen $\phi,$
one can see that the graph $G$ satisfies the equality in (\ref{first1}).
\end{remark}

Given a graph $G$ and $u\in V\left( G\right) $,  the set of neighbors of $u$ is denoted by
$N\left(u\right)$.
For a non trivial subset of vertices $X$, denote $N_{X}\left( u\right) :=X\cap N\left( u\right) $ and its cardinality
by $d_{X}\left( u\right) .$

\begin{remark}
\label{Important}
Let $G=\left( V,E\right) $ be a connected graph of order $n$. Let $%
X $ be a non trivial subset of $V$ with $n_{1}$ vertices. We set $n_{2}= n-n_{1}.$ 
Set  $\alpha_{X} =\frac{1}{n_{1}}\sum_{v\in X}d\left(
v\right) $ and $m_{X}=\frac{1}{2}\sum_{v\in X}d_{X}\left( v\right) $. 
Remark that
\begin{equation*}
\left( \alpha_{X} - \frac{2m_{X}}{n_1}\right) \left( 1+\tfrac{n_1}{n_{2}}\right) =\tfrac{n}{n_{2}}%
\left( \alpha_{X} - \frac{2m_{X}}{n_1}\right) =\tfrac{n}{n_{1}n_{2}}\sum_{v\in X}d_{X^{c}}\left(
v\right) =\rho _{G}\left( X\right).
\end{equation*}%

By direct calculation, it results
\begin{equation*}
\rho _{G}\left( X\right) =\left( \alpha _{X}-\frac{2m_{X}}{n_{1}}\right)
+\left( \alpha _{X^{c}}-\frac{2m_{X^{c}}}{n -n_{1}}\right) .
\end{equation*}
\end{remark}

\begin{corollary}
\label{new}Let $G=\left( V,E\right) $ be a connected graph of order $n$. Let $X$ be a non trivial subset of vertices of $G$ and let us
consider $\alpha _{X}$ as in Remark \ref{Important}. Then
\begin{equation}
S_{L}\left( G\right) \geq \max \left\{ \left\vert \rho _{G}\left(
X\right) -\alpha _{X}\right\vert ,\left\vert \rho _{G}\left(
X\right) -\alpha _{X^{c}}\right\vert \right\} .  \label{geometric}
\end{equation}
\end{corollary}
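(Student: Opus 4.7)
The plan is to apply Corollary \ref{Theorem1} with two specific choices of $\phi$, namely $\phi=\alpha_{X}$ and $\phi=\alpha_{X^{c}}$, and then take the maximum of the two resulting lower bounds. So the whole task reduces to verifying that both average degrees lie in the interval $[\mathit{a}(G),\mu_{1}(G)]$ that governs the admissible values of $\phi$ in Corollary \ref{Theorem1}.

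To do this, I would first observe that $\alpha_{X}=\frac{1}{n_{1}}\sum_{v\in X}d(v)$ is a weighted average of the degrees of the vertices in $X$, hence
\[
\delta(G)\leq \alpha_{X}\leq \Delta(G),
\]
and the same inequalities hold for $\alpha_{X^{c}}$. Next I would invoke the two classical bounds already recalled in the Introduction: on the one hand, $\mathit{a}(G)\leq \kappa_{0}(G)\leq \delta(G)$ (Fiedler's inequality, applicable since we may assume $G$ is connected and non-complete; the complete-graph case can be disposed of separately because then $S_{L}(G)=0$ and no non-trivial choice of $X$ is relevant), and on the other hand Merris's inequality $\Delta(G)+1\leq \mu_{1}(G)$. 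Chaining these with the previous display gives
\[
\mathit{a}(G)\leq \delta(G)\leq \alpha_{X}\leq \Delta(G)<\mu_{1}(G),
\]
so that $\alpha_{X}\in[\mathit{a}(G),\mu_{1}(G)]$, and analogously for $\alpha_{X^{c}}$.

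With this verification in hand, Corollary \ref{Theorem1} applied to $\phi=\alpha_{X}$ yields $S_{L}(G)\geq |\alpha_{X}-\rho_{G}(X)|$, and the same corollary applied to $\phi=\alpha_{X^{c}}$ yields $S_{L}(G)\geq |\alpha_{X^{c}}-\rho_{G}(X)|$. Taking the maximum of the two right-hand sides delivers the bound (\ref{geometric}). The identity in Remark \ref{Important} expressing $\rho_{G}(X)$ in terms of $\alpha_{X},\alpha_{X^{c}},m_{X},m_{X^{c}}$ is not strictly needed for the proof of the inequality, but it is what gives the bound its geometric flavor by writing each quantity $\alpha_{X}-\rho_{G}(X)$ in terms of the internal edge counts.

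There is no genuine technical obstacle; the only subtle point is making sure one remains inside the hypothesis of Corollary \ref{Theorem1}, that is, that the chosen $\phi$ really belongs to $[\mathit{a}(G),\mu_{1}(G)]$. Since the complete-graph case is degenerate (all non-zero Laplacian eigenvalues coincide, so $S_{L}(G)=0$), the argument above is in essence a straightforward assembly of the Fiedler upper bound for $\mathit{a}(G)$ and the Merris lower bound for $\mu_{1}(G)$ together with the trivial fact that an average of degrees is sandwiched between $\delta(G)$ and $\Delta(G)$.
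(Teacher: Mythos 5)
Your proof is essentially the paper's own: the paper also establishes the chain $a(G)\leq\delta(G)\leq\alpha_{X}\leq\Delta(G)\leq\mu_{1}(G)$ (and likewise for $\alpha_{X^{c}}$) and then applies Corollary \ref{Theorem1} with $\phi=\alpha_{X}$ and $\phi=\alpha_{X^{c}}$; you merely make explicit the Fiedler and Merris inequalities that the paper leaves implicit. The one place you diverge is the aside about complete graphs, and there your ``disposal'' does not actually work: for $G=K_{n}$ one has $S_{L}(K_{n})=0$ while $\rho_{K_{n}}(X)=n$ and $\alpha_{X}=n-1$, so the right-hand side of (\ref{geometric}) equals $1$ and the inequality fails; saying that ``no non-trivial choice of $X$ is relevant'' does not remove this counterexample, since the statement quantifies over all non-trivial $X$. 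This is really a (silent) gap in the paper's own statement and proof — the step $a(G)\leq\delta(G)$ requires $G$ non-complete — so your instinct to separate that case was right, but the honest conclusion is that the corollary needs the hypothesis that $G$ is not complete, not that the case is harmlessly degenerate.
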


\begin{proof}
We set $|X|=n_{1}$. Note that $a(G) \leq \frac{1}{n_{1}}n_{1}\delta \leq \alpha _{X}\leq
\frac{1}{n_{1}}n_{1}\Delta \leq \mu _{1}$ and with the same argument we
have\ $a(G) \leq \alpha _{X^{c}}\leq \mu _{1}.\ $ 
 Thus, the inequality in (\ref{geometric}) follows from Corollary \ref{Theorem1}.\bigskip
\end{proof}

\subsection{Lower bound for the Laplacian spread of graphs with an independent nontrivial subset of vertices}
The following results and their applications use Corollary \ref{Theorem1} in some special cases. In this section we present a lower bound for the Laplacian spread of graphs that have an independent nontrivial subset of vertices.

\begin{corollary}
\label{independence} Let $G$ be an $(n,m)$-connected graph and $
T$ an independent set of $
G $ on $n_{1}$ vertices. Then%
\begin{equation}
S_{L}\left( G\right) \geq \frac{n\sum_{v \in T} d\left( v \right) }{n_{1}\left(
n-n_{1}\right) }-\delta \left( G\right) .  \label{lowindependent1}
\end{equation}%
If the equality holds, then there exist $s$ and $t$ such that $%
d_{T^{c}}\left( v\right) =d\left( v\right) =s$, for all $v\in T$ and $%
d_{T}\left( v\right) =t$, for all $v\in T^{c}$, with $sn_{1}=t(n-n_{1})$. Moreover $\mu _{1}\left(
G\right) =\frac{n\sum_{v \in T} d\left( v \right) }{n_{1}\left( n-n_{1}\right) }$
and $\mathit{a}\left( G\right) =\delta \left( G\right) .$
\end{corollary}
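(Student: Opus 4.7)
The plan is to apply Corollary~\ref{Theorem1} with the choice $X=T$ and $\phi=\delta(G)$, and to exploit the independence of $T$ to compute $\rho_{G}(T)$ in closed form.

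First I would verify that $\delta(G)$ is an admissible value of $\phi$. Fiedler's bound recalled in the introduction gives $a(G)\le\delta(G)$, while for a connected graph on $n\ge 2$ vertices Merris's bound yields $\mu_{1}(G)\ge\Delta(G)+1\ge\delta(G)+1>\delta(G)$, so $a(G)\le\delta(G)\le\mu_{1}(G)$. Next, since $T$ is independent, no edge of $G$ has both endpoints in $T$, hence every edge incident to $v\in T$ lies in the cut $E_{T}(G)$ and is counted exactly once by $\sum_{v\in T}d(v)$. Consequently $|E_{T}(G)|=\sum_{v\in T}d(v)$ and
\[
\rho_{G}(T)=\frac{n\sum_{v\in T}d(v)}{n_{1}(n-n_{1})}=\frac{n}{n-n_{1}}\,\alpha_{T},
\]
with $\alpha_{T}=\tfrac{1}{n_{1}}\sum_{v\in T}d(v)$ as in Remark~\ref{Important}. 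Because $\alpha_{T}\ge\delta(G)$ and $n/(n-n_{1})\ge 1$, we obtain the auxiliary inequality $\rho_{G}(T)\ge\delta(G)$.

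Applying Corollary~\ref{Theorem1} with $\phi=\delta(G)$ and the computation above then gives
\[
S_{L}(G)\ge|\delta(G)-\rho_{G}(T)|=\rho_{G}(T)-\delta(G)=\frac{n\sum_{v\in T}d(v)}{n_{1}(n-n_{1})}-\delta(G),
\]
which is exactly~(\ref{lowindependent1}). For the equality case, Corollary~\ref{Theorem1} forces $\{\rho_{G}(T),\delta(G)\}=\{\mu_{1}(G),a(G)\}$; the alternative $\delta(G)=\mu_{1}(G)$ is ruled out by the strict inequality $\delta(G)<\mu_{1}(G)$ established above, so $\rho_{G}(T)=\mu_{1}(G)$ and $a(G)=\delta(G)$. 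The same corollary, together with Proposition~\ref{Lem1}, supplies integers $s,t$ with $sn_{1}=t(n-n_{1})$ such that $d_{T^{c}}(v)=s$ for every $v\in T$ and $d_{T}(v)=t$ for every $v\in T^{c}$; since $T$ is independent, $d(v)=d_{T^{c}}(v)=s$ for $v\in T$, which completes the stated conclusion. I do not foresee a serious obstacle: the proof reduces to the elementary identity $|E_{T}(G)|=\sum_{v\in T}d(v)$ for independent $T$ together with the admissibility check $a(G)\le\delta(G)<\mu_{1}(G)$.
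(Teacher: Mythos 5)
Your proof follows essentially the same route as the paper's: you use the identity $\lvert E_{T}(G)\rvert=\sum_{v\in T}d(v)$ for an independent set $T$ to get $\rho_{G}(T)\geq\delta(G)$ and then apply Corollary~\ref{Theorem1} with $\phi=\delta(G)$. You simply spell out more carefully than the paper the admissibility of $\phi=\delta(G)$ (via $a(G)\leq\delta(G)<\mu_{1}$) and the equality analysis, so the argument is correct and not a genuinely different approach.
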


\begin{proof}
Note that if $T\subset V(G)$ is an independent set, $\left\vert E_{T}\left( G\right) \right\vert = \sum_{v \in T} d\left( v \right) \geq n_{1}\delta \left( G\right)$ and so $\rho _{G}\left( T\right) \geq \frac{n\delta \left( G\right)}{n-n_{1}} > \delta \left( G\right)$. The result follows from Corollary \ref{Theorem1}
\end{proof}

\begin{corollary}
\label{independence copy(1)} Let $G$ be an $(n,m)$-connected graph and
$X $ a nontrivial subset of vertices such that $\left\langle X\right\rangle| _{G}=K_{n_{1}}$
.\ Then%
\begin{equation*}
S_{L}\left( G\right) \geq \frac{n_{1}\left( n-1-\Delta\left( G\right) \right) }{n-n_{1}}.
\end{equation*}
\end{corollary}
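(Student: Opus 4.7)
The plan is to apply Corollary \ref{Theorem1} with a carefully chosen intermediate value $\phi$ and then exploit the clique structure of $\langle X\rangle|_G$ to bound the edge density $\rho_G(X)$ from above.

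First, I would choose $\phi=\Delta(G)+1$. This is a valid choice for Corollary \ref{Theorem1} because the Merris-type bound cited in the introduction gives $\Delta(G)+1\le\mu_1(G)$, while $a(G)\le\delta(G)\le\Delta(G)\le\Delta(G)+1$. Consequently,
\begin{equation*}
S_L(G)\ \ge\ \bigl|\,\Delta(G)+1-\rho_G(X)\,\bigr|.
\end{equation*}

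Next, I would control $\rho_G(X)$ from above using the fact that $\langle X\rangle|_G=K_{n_1}$. Each vertex $v\in X$ contributes exactly $n_1-1$ edges inside $X$, so
\begin{equation*}
d_{X^c}(v)=d(v)-(n_1-1)\ \le\ \Delta(G)-n_1+1.
\end{equation*}
Summing over $v\in X$ yields $|E_X(G)|=\sum_{v\in X}d_{X^c}(v)\le n_1(\Delta(G)-n_1+1)$, and therefore
\begin{equation*}
\rho_G(X)=\frac{n\,|E_X(G)|}{n_1(n-n_1)}\ \le\ \frac{n(\Delta(G)-n_1+1)}{n-n_1}.
\end{equation*}

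Finally, the claimed bound follows from a short simplification. Since $\Delta(G)\le n-1$, one checks that $\Delta(G)+1\ge \rho_G(X)$ under the upper bound above, so the absolute value can be removed and
\begin{equation*}
S_L(G)\ \ge\ \Delta(G)+1-\frac{n(\Delta(G)-n_1+1)}{n-n_1}\ =\ \frac{(\Delta(G)+1)(n-n_1)-n(\Delta(G)-n_1+1)}{n-n_1}\ =\ \frac{n_1(n-1-\Delta(G))}{n-n_1},
\end{equation*}
which is exactly the desired inequality. The only nontrivial step is verifying that $\phi=\Delta(G)+1$ lies in the admissible range $[a(G),\mu_1(G)]$, and this is immediate from standard Laplacian eigenvalue inequalities already recalled in the paper; the rest is a direct algebraic consolidation, so I do not anticipate any real obstacle.
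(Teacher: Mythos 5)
Your proof is correct, but it takes a different route from the paper's. You work directly in $G$: you take $\phi=\Delta(G)+1$ in Corollary \ref{Theorem1} (legitimate, since $a(G)\le\delta(G)\le\Delta(G)+1\le\mu_1(G)$ by Merris' inequality for a connected graph with an edge), bound $|E_X(G)|\le n_1(\Delta(G)-n_1+1)$ using the clique structure, and simplify. The paper instead passes to the complement: it observes that $X$ is an independent set in $\overline{G}$, applies the edge-density bound to $\overline{G}$ with $\phi=\delta(\overline{G})=n-1-\Delta(G)$, and uses $S_L(G)=S_L(\overline{G})$ together with $\rho_{\overline{G}}(X)=n-\rho_G(X)$. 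The two arguments are dual to one another --- via $a(\overline{G})=n-\mu_1(G)$, the inequality $a(\overline{G})\le\delta(\overline{G})$ used by the paper is exactly the Merris bound $\mu_1(G)\ge\Delta(G)+1$ that you invoke --- and the closing algebra is identical. Your version is arguably cleaner: it stays in $G$, and it sidesteps the fact that Corollary \ref{Theorem1} is stated for connected graphs while the paper applies it to $\overline{G}$ without verifying that $\overline{G}$ is connected (this is harmless, since the underlying Proposition \ref{Lem1} needs no connectivity, but your route avoids the issue entirely). One tiny remark: your final paragraph about "checking that the absolute value can be removed" is unnecessary, since $|\phi-\rho_G(X)|\ge\phi-\rho_G(X)$ holds unconditionally; the verification you perform is correct but not needed for the inequality to go through.
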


\begin{proof}
Throughout the proof  $\Delta =\Delta \left( G\right) $. It is clear that $\left\langle X\right\rangle| _{\overline{G}}=\overline{K}_{n_{1}}$ and therefore it is an independent set in  $\overline{G}$. We now apply Corollary \ref{Theorem1} 
to $\overline{G}$ in order to obtain
\begin{equation}
S_{L}\left( \overline{G}\right) \geq \rho _{\overline{G}}\left( X\right)-\delta \left( \overline{G}\right).
\label{complement}
\end{equation}%

Note that $\left\vert E_{X}\left( \overline{G}\right) \right\vert = n_1(n-n_1) -\left\vert E_{X}\left(G\right) \right\vert$ and so $\rho _{\overline{G}}\left( X\right)= n-\rho _{G}\left( X\right)$.

Since $\delta \left( \overline{G}\right)= n-1-\Delta,$  $\left\vert E_{X}\left( G\right) \right\vert \leq n_1(\Delta - (n_1 - 1))$, for $\left\langle X\right\rangle| _{G}=K_{n_{1}}$, and recalling that $S_{L}\left( \overline{G}\right)
=S_{L}\left( G\right) $
we obtain

$$S_{L}\left( G\right) \geq \Delta  +1 - \rho _{\overline{G}}\left( X\right) \geq   \Delta +1 - \frac{nn_1 \left(\Delta  +1 - n_1\right)}{n_1 \left(n-n_1\right)} = \frac{n_{1}\left( n-1-\Delta\right) }{n-n_{1}}.$$
\end{proof}

\begin{remark}
If $G$ is a $p$-regular graph with $p>0$, and a nontrivial subset $X\ $of
vertices such that $\left\langle X\right\rangle| _{G}=K_{n_{1}},$ the lower
bound in Corollary \ref{independence copy(1)} is an improvement of
the lower bound in (\ref{basic_lower_bound}).
\end{remark}

\begin{remark}
Let $G$ be an $\left( n,m\right) $ connected graph and $T=\left\{
v_{1},v_{2},\ldots ,v_{n_{1}}\right\} $ the set of pendant vertices of $G$.$\ $%
Note that $\left\vert E_{T}\left( G\right) \right\vert = n_1$ and $\delta \left( G\right) =1$.
Then by Corollary~\ref{independence}, we may conclude that
\begin{equation}
S_{L}\left( G\right) \geq \frac{n}{n-n_{1}}-1=\frac{n_{1}}{n-n_{1}}.
\label{marca_xyz}
\end{equation}
which corresponds to the proportion among the pendant vertices and the non
pendent vertices in $G$. Moreover, if the equality holds, then there exists $t$ such that $d_{T}\left( v\right) =t$, for all $v\in T^{c}$ and $n_{1} 1=(n-n_{1})t$. Moreover, $\mu
_{1}\left( G\right) =\frac{n}{n-n_{1}}$ and $\mathit{a}\left( G\right) =1.$
As $a(G)\leq d(v)$, for all vertex $v\in V(G)$, the condition $a(G) =1$ implies that $n_{1}\geq n-n_{1}$ and this means that, for the equality case we must have $n_{1}\geq n/2$.
\end{remark}

Now, we search for a lower bound for $S_{L}(G)$ in terms of the order of $N,$ where $N$ is a DIM of $%
G.$

\begin{corollary}
\label{DIM2}Let $G$ be an $\left( n,m\right) $ connected graph.
Let us consider that $N=\left\{ \ i_{1}j_{1},\ldots ,i_{k}j_{k}\right\} $ is a
DIM of $G$ 
. Then
\begin{equation}
S_{L}(G)\geq \frac{n\left( m-k\right) }{2k\left( n-2k\right) }-\delta .
\label{ineqdim}
\end{equation}%
If equality holds, there are integer $s$ and $t$ such that $d_{R}(v) =s$, for all $v \in V(N)$ and $d_{V(N)}(v)=t$ for all $v \in R$, where $R = V(N)^{c}.$  Moreover, $s(2k)=(n-2k)t$, $\frac{n\left( m-k\right) }{2k\left( n-2k\right) }=\mu _{1},$ and $\delta =
\mathit{a}\left( G\right) .$
\end{corollary}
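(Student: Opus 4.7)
The plan is to recognize the claim as a direct application of Corollary~\ref{independence} with the independent set $R = V(N)^{c}$ furnished by the DIM structure. Recall from the introduction that, since $N = \{i_{1}j_{1},\ldots,i_{k}j_{k}\}$ is a DIM of $G$, the vertex set partitions as $V(G) = V(N) \cup R$ with $|V(N)| = 2k$, $|R| = n-2k$, and $R$ independent. So the natural candidate for $T$ in Corollary~\ref{independence} is $T = R$, with $n_{1} = n-2k$.

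The key combinatorial input is to compute $|E_{R}(G)|$. I would split $E(G)$ into $N$ and $E(G)\setminus N$; the $k$ matching edges both have endpoints inside $V(N)$, so they contribute nothing to $E_{R}(G)$. For any $e = uv \in E(G)\setminus N$, the DIM hypothesis says that $e$ shares an endpoint with \emph{exactly one} edge of $N$. If both $u$ and $v$ lay in $V(N)$, then $u$ would be an endpoint of some $i_{s}j_{s}\in N$ and $v$ of some $i_{t}j_{t}\in N$; these two matching edges are distinct (otherwise $e=i_{s}j_{s}\in N$), so $e$ would share a vertex with two edges of $N$, contradicting the DIM property. Therefore every edge of $E(G)\setminus N$ has exactly one endpoint in $V(N)$ and one in $R$, giving $|E_{R}(G)| = m-k$.

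Now apply Corollary~\ref{independence} with $T = R$. Because $R$ is independent, every edge incident to $R$ crosses to $V(N)$, so $\sum_{v\in R} d(v) = |E_{R}(G)| = m-k$. Substituting into the bound of Corollary~\ref{independence} gives
$$
S_{L}(G) \geq \frac{n\,(m-k)}{(n-2k)\,(2k)} - \delta,
$$
which is the claimed inequality. For the equality case I would invoke the equality characterization of Corollary~\ref{independence}: there exist integers realising uniform neighbourhood counts between $R$ and $V(N)$, and after the natural relabelling of the two sides one recovers the statement: $d_{R}(v)=s$ for every $v\in V(N)$, $d_{V(N)}(v)=t$ for every $v\in R$, with $s(2k) = (n-2k)t$, together with $\mu_{1}(G) = \rho_{G}(R) = \frac{n(m-k)}{2k(n-2k)}$ and $a(G) = \delta$.

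The main obstacle is the edge-counting step of the second paragraph, since that is the only place the DIM hypothesis is actually used; once $|E_{R}(G)| = m-k$ is established, the result follows mechanically from the already proved Corollary~\ref{independence}.
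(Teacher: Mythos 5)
Your proposal is correct and follows the paper's own route exactly: take $T=R=V(N)^{c}$, which is independent with $n-2k$ vertices and satisfies $\sum_{v\in R}d(v)=\lvert E_{R}(G)\rvert=m-k$, and then invoke Corollary~\ref{independence} together with its equality characterization. The only difference is that you spell out the DIM edge-count $\lvert E_{R}(G)\rvert=m-k$, which the paper asserts without detail.
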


\begin{proof}
Since $N$ is a DIM, $R = V(N)^{c}$ is an independent set of $G$ with $n-2k$ vertices and $\left\vert E_{R}\left( G\right) \right\vert =\sum_{v \in R} d\left( v \right)= m-k.$ The result follows from Corollary~\ref{independence}.
\end{proof}

\begin{remark}
Since the Laplacian eigenvalues are either integers or irrational numbers,
see e. g. in \cite{Fallat_Kirk}, the equality case in Corollary \ref{DIM2} implies that either $n$, $m-k$ or both
must be even numbers.
\end{remark}

\begin{corollary}
Let $G$ be a graph with $n$ vertices. Let $N=\left\{ \ i_{1}j_{1},\ldots
,i_{k}j_{k}\right\} $ be a DIM of $G$, where $2k\leq n-1.$ Let $ X= V\left( N\right) $. Then
\begin{equation*}
S_{L}(G)\geq \alpha _{X}-1.
\end{equation*}%
If equality holds then the conditions 1. and 2. of Proposition \ref{Lem1} hold for $X=V(N).$
Moreover, $\rho _{G}\left( V\left( N\right) \right) =\mu _{1}$ and $\frac{1}{%
n-2k}\sum_{v\in V\left( N\right) ^{c}}d\left( v\right) =\mathit{a}\left(
G\right) .$
\end{corollary}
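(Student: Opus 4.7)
My plan is to obtain the inequality as a direct specialization of Corollary \ref{new} applied to $X = V(N)$, using the combinatorial fact that the induced-matching structure forces $\tfrac{2m_X}{|X|} = 1$. The rest of the statement is then read off from the equality clause of Corollary \ref{Theorem1}.

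The first step is to identify the terms appearing in Remark \ref{Important} for this specific choice of $X$. Since $N$ is a DIM, the complement $X^c = V(N)^c$ is independent, so $m_{X^c} = 0$. Since $N$ is, in particular, an \emph{induced} matching, the subgraph $\langle V(N)\rangle|_G$ consists of exactly the $k$ edges of $N$, so $|X| = 2k$ and $m_X = k$; hence $\tfrac{2m_X}{|X|} = 1$. Plugging these into the identity
$$\rho_G(X) = \Bigl(\alpha_X - \tfrac{2m_X}{|X|}\Bigr) + \Bigl(\alpha_{X^c} - \tfrac{2m_{X^c}}{n - |X|}\Bigr)$$
from Remark \ref{Important} yields $\rho_G(X) = (\alpha_X - 1) + \alpha_{X^c}$, so that $\rho_G(X) - \alpha_{X^c} = \alpha_X - 1$. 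Every vertex of $V(N)$ is incident to a matching edge and therefore has degree at least $1$, so $\alpha_X \geq 1$ and this difference is nonnegative.

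The bound then follows immediately from Corollary \ref{new}:
$$S_L(G) \geq \bigl|\rho_G(X) - \alpha_{X^c}\bigr| = \alpha_X - 1.$$
For the equality case I would apply the equality clause of Corollary \ref{Theorem1} with $\phi = \alpha_{X^c}$ (which satisfies $a(G)\leq \phi \leq \mu_1$ by the same reasoning as in the proof of Corollary \ref{new}). That clause forces conditions 1. and 2. of Proposition \ref{Lem1} for $X = V(N)$ and requires $\{\rho_G(X), \alpha_{X^c}\} = \{\mu_1, a(G)\}$; the sign computation $\rho_G(X) - \alpha_{X^c} = \alpha_X - 1 \geq 0$ then pins down $\rho_G(V(N)) = \mu_1$ and $\frac{1}{n-2k}\sum_{v\in V(N)^c} d(v) = \alpha_{X^c} = a(G)$, which are the last two claims.

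The only place that uses the hypotheses in a nontrivial way is the combinatorial identification $m_X = k$: a matching that is not \emph{induced} could leave extra edges inside $V(N)$ and destroy the cancellation that produces the clean $\alpha_X - 1$. Beyond that, the argument is a plug-in into the already established machinery, so I do not anticipate any serious obstacle.
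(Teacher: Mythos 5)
Your proposal is correct and follows essentially the same route as the paper: both compute $2m_X = 2k$ and $2m_{X^c}=0$ from the DIM structure, plug into the identity of Remark \ref{Important} to get $\rho_G(V(N)) = \alpha_X - 1 + \alpha_{X^c}$, invoke Corollary \ref{new}, and settle the equality case via Corollary \ref{Theorem1}. Your explicit remarks that a DIM is necessarily an induced matching and that $\alpha_X \geq 1$ (to drop the absolute value) are details the paper leaves implicit.
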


\begin{proof}
Let $X=V\left( N\right) $ and $n_{1}=2k$.\
Taking into account Remark \ref{Important},
\begin{eqnarray*}
\alpha _{X}  & = & \frac{1}{2k}\sum_{v\in V(N)}d\left( v\right), \\
\alpha _{X^{c}}  & = & \frac{1}{n-2k}\sum_{v\in V(N)^{c}}d\left( v\right),\\
2m_{X}        & = & \sum_{v\in V(N)}d_{V(N)}\left( v\right) =2k \mbox{, and}\\
 2m_{X^{c}}       & = & \sum_{v\in V(N)^{c}}d_{V(N)^{c}}\left( v\right) =0.
\end{eqnarray*}
Therefore, $\rho _{G}\left( V(N)\right)
=\alpha _{X}-\frac{2k}{2k}+$ $\alpha _{X^{c}}$.
We now apply Corollary \ref{new} to obtain
\begin{equation*}
S_{L}(G)\geq \left\vert \rho _{G}\left( X\right) -\alpha _{X^{c}}\right\vert
=\alpha _{X}-1.
\end{equation*}%
By Corollary \ref{Theorem1}, if equality holds there exists an
integer $s$ such that $d_{V(N)^{c}}\left( v\right) =s,\ $ for all $v\in V(N),$
$d_{V(N)}\left( v\right) =t$ for all $v\in V(N)^{c}$, and $s(2k)=t(n-2k)$. Additionally, $\rho _{G}\left(
V(N)\right) =\alpha _{X}-1+$ $\alpha _{X^{c}}=\mu _{1}$,  and $\mathit{a}\left( G\right) = \alpha _{X^{c}}.$
\end{proof}

\subsection{ Lower bounds for the Laplacian spread of graphs with $(\kappa ,\tau )$-regular subset of vertices}

Now we search for lower bounds for $S_{L}(G)$ in terms of the order of $S$,
where $S$ is a $(\kappa ,\tau )$-regular subset of $V$.



\begin{corollary}
Let $G=\left( V,E\right) $ be a graph with $n$ vertices and $S$ a $(\kappa, \tau)$- regular subset of $V$ with cardinality $n_1,$  and suppose that there
exists a vertex $v$ $\in S^{c}$ with no neighbors in $S^{c}$. Then%
\begin{equation}
S_{L}(G)\geq \frac{\tau \left( n-n_{1}\right) }{n_{1}}.
\end{equation}%
If the equality holds there exists an integer $s$ such that $d_{S^{c}}\left(
v\right) =s,\ $ for all $v\in S$, $sn_{1}=\tau(n-n_{1})$, with  $\mu _{1}=\frac{\tau n}{n_{1}}$ and $%
\mathit{a}\left( G\right) =\tau .$
\end{corollary}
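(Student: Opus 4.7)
The plan is to apply Corollary \ref{Theorem1} to the set $X=S$ with the choice $\phi=\tau$, and then exploit the $(\kappa,\tau)$-regular structure to compute $\rho_G(S)$ explicitly.

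First I would count edges between $S$ and $S^{c}$. Since every vertex of $S^{c}$ has exactly $\tau$ neighbors in $S$, summing over $S^{c}$ yields
\begin{equation*}
|E_{S}(G)| = \tau\,(n-n_{1}),
\end{equation*}
so that
\begin{equation*}
\rho_{G}(S) = \frac{n\,\tau\,(n-n_{1})}{n_{1}(n-n_{1})} = \frac{n\,\tau}{n_{1}}.
\end{equation*}

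Next I would verify that the choice $\phi=\tau$ is admissible in Corollary \ref{Theorem1}, i.e.\ $a(G)\le\tau\le\mu_{1}$. For the left inequality, the hypothesis provides a vertex $v\in S^{c}$ with no neighbors in $S^{c}$; such a vertex has degree equal to $d_{S}(v)=\tau$, whence $\delta(G)\le\tau$ and therefore $a(G)\le\delta(G)\le\tau$. For the right inequality, Proposition \ref{Lem1} already gives $\tau\le \frac{n\tau}{n_{1}}=\rho_{G}(S)\le\mu_{1}$, since $n_{1}\le n$. Applying Corollary \ref{Theorem1} with $X=S$ and $\phi=\tau$ then produces
\begin{equation*}
S_{L}(G)\ \ge\ \left|\tau-\frac{n\,\tau}{n_{1}}\right|\ =\ \frac{\tau\,(n-n_{1})}{n_{1}},
\end{equation*}
which is the claimed bound.

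For the equality discussion I would invoke the second part of Corollary \ref{Theorem1}: conditions 1.\ and 2.\ of Proposition \ref{Lem1} must hold for $S$, so there exist integers $s$ and $t$ with $d_{S^{c}}(v)=s$ for every $v\in S$, $d_{S}(v)=t$ for every $v\in S^{c}$, and $sn_{1}=t(n-n_{1})$. The $(\kappa,\tau)$-regularity forces $t=\tau$, hence $sn_{1}=\tau(n-n_{1})$. Finally, since $\rho_{G}(S)=\tfrac{n\tau}{n_{1}}>\tau=\phi$ (strictly, as $n>n_{1}$), the equality clause of Corollary \ref{Theorem1} identifies $\mu_{1}=\rho_{G}(S)=\tfrac{n\tau}{n_{1}}$ and $a(G)=\tau$. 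There is no real obstacle here; the only subtle point is justifying $a(G)\le\tau$, which is where the hypothesis on the existence of a vertex $v\in S^{c}$ with no neighbor in $S^{c}$ is used decisively.
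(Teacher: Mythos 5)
Your proof is correct and follows essentially the same route as the paper: compute $\rho_{G}(S)=\tau n/n_{1}$ from the $(\kappa,\tau)$-regularity, verify $a(G)\le\tau\le\mu_{1}$, and apply Corollary~\ref{Theorem1} with $\phi=\tau$, reading off the equality case from that corollary. The only cosmetic difference is in justifying $a(G)\le\tau$: you use $a(G)\le\delta(G)\le d(v)=\tau$ for the isolated-in-$S^{c}$ vertex $v$, while the paper observes that deleting the $\tau$ edges joining $v$ to $S$ disconnects $G$; both are valid.
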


\begin{proof}
Let $S$ be a $(\kappa, \tau)$- regular subset of $V$ with cardinality $n_1,$ $\left\vert E_{S^{c}}\left( G\right) \right\vert = (n-n_1)\tau$ and
\begin{equation*}
\rho _{G}\left( S\right) =\rho _{G}\left( S^{c}\right) =\frac{n\left(n-n_{1}\right) \tau }{n_{1}\left( n-n_{1}\right) }=\frac{\tau n}{n_{1}}.
\end{equation*}
Let $v\in S^{c}$ be a vertex  with no neighbors  in $S^{c}.$ Then, by
removing the $\tau $ edges connecting $v$ with its $\tau $ neighbors in $S,$
the graph $G$ becomes disconnected. In consequence, $\mathit{a}\left(
G\right) \leq \tau \leq \frac{\tau n}{n_{1}}\leq \mu _{1}$ and 
\begin{equation*}
S_{L}(G)\geq \frac{\tau n}{n_{1}}-\tau =\frac{\tau \left( n-n_{1}\right) }{%
n_{1}}.
\end{equation*}%
The equality case follows from the equality case in Corollary \ref{Theorem1}
\end{proof}

\begin{corollary}
Let $G=\left( V,E\right) $ be a graph with $n$ vertices, $S$ a $(\kappa, \tau)$- regular subset of $V$ with cardinality $n_1$ and $\kappa \leq \tau $.
Suppose that there exists a vertex $v$ $\in S$ with no neighbors in $S^{c}$.\
Then%
\begin{equation}
S_{L}(G)>\frac{\tau n}{n_{1}}-\kappa .
\end{equation}
\end{corollary}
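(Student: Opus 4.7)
The plan is to mimic the strategy of the previous corollary: bound $\mu_1$ from below via Proposition \ref{Lem1} and bound $a(G)$ from above by the minimum degree, then combine. The twist here is that the distinguished vertex $v$ lives in $S$ rather than in $S^c$, and I expect this asymmetry to force the inequality to be strict.

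First I would compute the edge density. Since $S$ is $(\kappa,\tau)$-regular, every vertex of $S^c$ contributes exactly $\tau$ edges to $E_S(G)$, so $|E_S(G)| = \tau(n-n_1)$ and hence $\rho_G(S) = \tau n / n_1$. Next I would bound $a(G)$: the distinguished $v \in S$ has its $\kappa$ neighbors all inside $S$ and none in $S^c$, so $d(v) = \kappa$; therefore $\delta(G) \leq \kappa$ and Fiedler's inequality gives $a(G) \leq \delta(G) \leq \kappa$.

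The hard part will be upgrading the non-strict bound $\mu_1 \geq \rho_G(S)$ from Proposition \ref{Lem1} to a strict one. I would argue by contradiction: if $\mu_1 = \rho_G(S)$, then since $E_S(G)$ is a cut (its removal separates $S$ from $S^c$), the equality clause of Proposition \ref{Lem1} applied to $X = S$ produces integers $s, t$ with $s n_1 = t(n-n_1)$ such that every vertex of $S$ has $s$ neighbors in $S^c$ and every vertex of $S^c$ has $t$ neighbors in $S$. The existence of $v$ forces $s = 0$, while the $(\kappa,\tau)$-regularity of $S$ forces $t = \tau$, giving $0 = \tau(n-n_1)$, which is impossible whenever $\tau \geq 1$. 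Combining the resulting strict bound $\mu_1 > \tau n / n_1$ with $a(G) \leq \kappa$ then yields $S_L(G) > \tau n / n_1 - \kappa$, as claimed. The degenerate case $\tau = 0$ makes the bound $-\kappa \leq 0 \leq S_L(G)$ automatic; the hypothesis $\kappa \leq \tau$ plays no role in the argument itself, but ensures that the bound is nonnegative, since $\tau n/n_1 - \kappa \geq \tau(n-n_1)/n_1 \geq 0$.
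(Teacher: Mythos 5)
Your proposal is correct and follows essentially the same route as the paper: compute $\rho_G(S)=\tau n/n_1$, bound $a(G)\leq\kappa$ (the paper does this by noting that deleting the $\kappa$ edges at $v$ disconnects $G$, you via $\delta(G)\leq d(v)=\kappa$), and rule out equality through the equality clause of Proposition \ref{Lem1}, where $d_{S^c}(v)=0$ forces $s=0$ against $t=\tau$. Your contradiction $0=\tau(n-n_1)$ just makes explicit the paper's terse remark that ``as $d_{S^c}(v)=0$, there is no equality,'' and both arguments tacitly assume $\tau\geq 1$ (equivalently, that $G$ is connected).
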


\begin{proof}
Let $v\in S$ in the condition of the statement then, removing $%
\kappa $ edges connecting $v$ with its $\kappa $ neighbors in $S$ the graph $%
G$ becomes disconnected. Therefore $\mathit{a}\left( G\right) \leq
\kappa \leq \tau \leq \frac{\tau n}{n_{1}}\leq \mu _{1}$. Taking $
\phi =\kappa $ in Corollary \ref{Theorem1} we obtain
\begin{equation*}
S_{L}(G)\geq \frac{\tau n}{n_{1}}-\kappa,
\end{equation*}%
and, as $d_{S^c}(v)=0$, there is no equality. Then, the result follows.
\end{proof}

\begin{corollary}
\label{difference}Let $G=\left( V,E\right) $ be a graph with $n$ vertices and $S$ a $(\kappa, \tau)$- regular subset of $V$ with cardinality $n_1.$  If $
\tau -\kappa \leq 1$, then
\begin{equation}
S_{L}(G)\geq \Delta +1-\frac{\tau n}{n_{1}}.  \label{low}
\end{equation}%
If the equality holds there exists an integer $s$ such that $d_{S^{c}}\left(
v\right) =s$, for all $v\in S$ and $sn_{1}=\tau(n-n_{1})$. Moreover,\ $\mu _{1}=\Delta +1$ and $a\left( G\right) =\frac{\tau n}{n_{1}}.$
\end{corollary}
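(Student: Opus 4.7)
The plan is to deploy Corollary \ref{Theorem1} with the choice $\phi = \Delta + 1$. I will first check this choice is admissible by verifying $a(G) \leq \phi \leq \mu_{1}$: the upper inequality is the Merris bound $\Delta + 1 \leq \mu_{1}$ recalled in the introduction, while the lower inequality is immediate from $a(G) \leq \delta \leq \Delta < \Delta + 1$. Then Corollary \ref{Theorem1} applied to $X = S$ delivers
\begin{equation*}
S_{L}(G) \geq |\Delta + 1 - \rho_{G}(S)| \geq \Delta + 1 - \rho_{G}(S).
\end{equation*}

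The next step is to compute $\rho_{G}(S)$ directly from the $(\kappa,\tau)$-regular structure. Since every vertex in $S^{c}$ has exactly $\tau$ neighbors in $S$, counting the edges of $E_{S}(G)$ from the $S^{c}$-side gives $|E_{S}(G)| = \tau(n - n_{1})$, so $\rho_{G}(S) = \frac{n\tau(n-n_{1})}{n_{1}(n-n_{1})} = \frac{\tau n}{n_{1}}$. Substituting into the previous inequality yields exactly (\ref{low}).

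The role of the hypothesis $\tau - \kappa \leq 1$ is not a logical obstacle but rather a calibration that makes the bound non-negative, and therefore informative. Indeed, averaging the identity $\sum_{v \in S} d_{S^{c}}(v) = \tau(n - n_{1})$ over $v \in S$ produces at least one vertex in $S$ whose degree is $\kappa + d_{S^{c}}(v) \geq \kappa + \tau(n-n_{1})/n_{1}$. Hence $\Delta + 1 - \tau n/n_{1} \geq \kappa + 1 - \tau$, which is $\geq 0$ precisely under the stated hypothesis.

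Finally, the equality characterization is read off from the equality case of Corollary \ref{Theorem1}. Conditions 1 and 2 of Proposition \ref{Lem1} applied to $X = S$ supply an integer $s$ with $d_{S^{c}}(v) = s$ for every $v \in S$ and $sn_{1} = t(n-n_{1})$, where $t = \tau$ holds automatically by the $(\kappa,\tau)$-regularity of $S$. The additional identification $\{\rho_{G}(S), \Delta + 1\} = \{\mu_{1}, a(G)\}$ cannot put $\Delta + 1$ on the $a(G)$ side, because $a(G) \leq \delta \leq \Delta < \Delta + 1$; this forces $\mu_{1} = \Delta + 1$ and $a(G) = \rho_{G}(S) = \tau n/n_{1}$, which is exactly what the statement asserts. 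The only subtle point throughout is this last sign/ordering check that rules out one of the two alternatives in the equality case of Corollary \ref{Theorem1}.
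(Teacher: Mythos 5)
Your proof is correct and follows essentially the same route as the paper: apply Corollary \ref{Theorem1} with $\phi=\Delta+1$ to $X=S$, compute $\rho_{G}(S)=\tau n/n_{1}$ from the $(\kappa,\tau)$-regularity, and read the equality case off Proposition \ref{Lem1}. The only (inessential) divergence is where $\tau-\kappa\leq 1$ enters: the paper uses it to show $\rho_{G}(S)\leq\Delta+1$ directly, via $\left\vert E_{S}(G)\right\vert=\sum_{v\in S}\left(d(v)-\kappa\right)\leq n_{1}(\Delta+1-\tau)$, so that the absolute value in (\ref{first1}) resolves to $\Delta+1-\rho_{G}(S)$, whereas you discard the absolute value unconditionally via $\vert x\vert\geq x$ and invoke the hypothesis only to certify that the right-hand side of (\ref{low}) is nonnegative --- an equivalent bookkeeping of the same estimate.
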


\begin{proof}
Since $S$ is a $(\kappa, \tau)$- regular subset of $V$ with cardinality $n_1$ and $\tau -\kappa \leq 1$, $\left\vert E_{S}\left( G\right) \right\vert = \sum_{v \in S} d(v) - \sum_{ v\in S} d_{S}(v) \leq n_1(\Delta - \kappa) \leq n_1(\Delta +1 - \tau)$. It follows from Proposition \ref{Lem1} that
$$\mathit{a}\left( G\right) \leq \frac{\tau n}{n_1} = \tau - \frac{(n-n_1)\tau}{n_1} = \tau + \frac{\left\vert E_{S}\left( G\right) \right\vert}{n_1}\leq \Delta +1 \leq \mu_1$$
Taking $\phi =\Delta +1$ in Corollary \ref{Theorem1}, the result follows.
\end{proof}
\begin{corollary}\label{special_2}
Let $G=\left( V,E\right) $ be a graph with $n$ vertices and $S$ a $(\kappa, \tau)$- regular subset of $V$ with cardinality $n_1.$ Then
\begin{equation}
S_{L}(G)\geq \left\vert \kappa -\tau \right\vert .
\end{equation}
If equality holds there exists an integer $s,$ $d_{S^{c}}\left( v\right) =s$
for all $v\in S$ and $sn_{1}=\tau(n-n_{1})$.
\end{corollary}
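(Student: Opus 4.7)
The plan is to deduce this corollary directly from Corollary \ref{new} applied with $X = S$. The key observation is that for a $(\kappa,\tau)$-regular subset, both $\rho_G(S)$ and $\alpha_S$ admit closed-form expressions, and their difference collapses cleanly to $\tau-\kappa$.

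First I would compute $|E_S(G)|$ by counting from the $S^c$ side: since every vertex of $S^c$ has exactly $\tau$ neighbors in $S$, we get $|E_S(G)| = (n-n_1)\tau$, and hence
\begin{equation*}
\rho_G(S) = \frac{n(n-n_1)\tau}{n_1(n-n_1)} = \frac{n\tau}{n_1}.
\end{equation*}
Next I would compute $\alpha_S$ by splitting each degree $d(v)$ for $v\in S$ into its $S$-part and $S^c$-part: since $\langle S\rangle|_G$ is $\kappa$-regular and $\sum_{v\in S} d_{S^c}(v) = |E_S(G)| = (n-n_1)\tau$, we obtain
\begin{equation*}
\alpha_S = \kappa + \frac{(n-n_1)\tau}{n_1}.
\end{equation*}

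Subtracting these two expressions yields $\rho_G(S) - \alpha_S = \tau - \kappa$, and Corollary \ref{new} then gives
\begin{equation*}
S_L(G) \geq |\rho_G(S) - \alpha_S| = |\kappa - \tau|,
\end{equation*}
which is the claimed inequality. No real obstacle arises here; the only thing to verify is that Corollary \ref{new} is indeed applicable, but this was established in its own proof (the quantity $\alpha_S$ always lies between $a(G)$ and $\mu_1$).

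For the equality case, I would trace back through the application of Corollary \ref{new} to Corollary \ref{Theorem1}: if equality holds then conditions 1 and 2 of Proposition \ref{Lem1} must hold for the set $X = S$. Condition 1 says there exist integers $s,t$ such that every vertex of $S$ has exactly $s$ neighbors in $S^c$ and every vertex of $S^c$ has exactly $t$ neighbors in $S$; but by the $(\kappa,\tau)$-regularity of $S$ we already know $t=\tau$, so this reduces to the statement $d_{S^c}(v)=s$ for every $v\in S$. Condition 2 then becomes $sn_1 = \tau(n-n_1)$, matching the assertion of the corollary.
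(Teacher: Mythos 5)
Your proposal is correct and follows essentially the same route as the paper: both compute $\rho_G(S)=n\tau/n_1$ and $\alpha_S=\kappa+(n-n_1)\tau/n_1$ so that $\alpha_S=\kappa-\tau+\rho_G(S)$, then apply Corollary \ref{new} and trace the equality case back through Corollary \ref{Theorem1}. No substantive difference.
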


\begin{proof}
As $S$ is a $(\kappa, \tau)$- regular subset of $V$ with cardinality $n_1, $
\begin{eqnarray*}
\alpha _{S}=\frac{1}{n_{1}}\sum_{v\in
S}d\left( v\right) = \frac{1}{n_1}\left( \sum_{v \in S} d_{S}(v) + \left\vert E_{S}\left( G\right) \right\vert \right)\\ = \frac{1}{n_1}\left(n_1 \kappa + \left(n - n_1\right) \tau \right) = \kappa + \frac{n-n_1}{n_1} \tau
\\= \kappa -\tau + \frac{n\tau}{n_1} = \kappa - \tau + \rho_{G}\left( S \right).
\end{eqnarray*}

Applying Corollary \ref{new} we obtain
\begin{equation*}
S_{L}(G)\geq \left\vert \rho _{G}\left( S \right) -\alpha _{S}\right\vert
=\left\vert \kappa -\tau \right\vert .
\end{equation*}%
Moreover, by Corollary \ref{Theorem1}, if equality holds there exists an
integer $s,$ $d_{S^{c}}\left( v\right) =s$ for all $v\in S$, and $s n_1 = \tau (n-n_1)$.
\end{proof}

\begin{example}
If $S=V\left( C_{4}\right) $ in $G=C_{4}\vee C_{4}$ then $S$ is a $\left(
2,4\right) =\left( \kappa ,\tau \right) \ $-regular set of $V\left( G\right) .$
Moreover, $\Delta =\delta =6$, and therefore, $\Delta +1-\delta
=1<\left\vert -\kappa +\tau \right\vert =2=S_{L}(G)$. Thus, in this case,
the lower bound in Corollary \ref{special_2} is sharp and it is an
improvement of (\ref{basic_lower_bound}).
\end{example}

\section{Lower bound for Laplacian spread of graphs with prescribed degree sequences}

In this section, using some known numerical inequalities and an appropriate parameter, we present lower bounds for the Laplacian spread of a graph with a prescribed degree sequence.

We start recalling  the following numerical inequality, \cite{Pecaric}.

\begin{lemma}
\label{numineq}Let $a=\left( a_{1},\ldots ,a_{n}\right) $ and $b=\left(
b_{1},\ldots ,b_{n}\right) $ be two positive $n$-vectors with $0<m_{1}\leq
a_{i}\leq M_{1}$ and $0<m_{2}\leq b_{i}\leq M_{2}$, $i\in \{1,\ldots ,n\},$  and constants $m_{1},m_{2},M_{1},  M_{2}$. The following complement of Cauchy's inequality holds
\begin{equation}
\frac{\sum_{i=1}^{n}a_{i}^{2}}{\sum_{i=1}^{n}a_{i}b_{i}}-%
\frac{\sum_{i=1}^{n}a_{i}b_{i}}{\sum_{i=1}^{n}b_{i}^{2}}\leq
\left( \left( \frac{M_{1}}{m_{2}}\right) ^{\frac{1}{2}}-\left( \frac{m_{1}}{%
M_{2}}\right) ^{\frac{1}{2}}\right) ^{2}.  \label{ineqnume}
\end{equation}
\end{lemma}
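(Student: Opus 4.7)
The plan is to recognize (\ref{ineqnume}) as a P\'olya--Szeg\H{o} (Kantorovich) type complement of the Cauchy--Schwarz inequality and to derive it from the single pointwise observation that each ratio $a_{i}/b_{i}$ lies in the fixed interval $[m_{1}/M_{2},\,M_{1}/m_{2}]$. The main idea is to convert that interval containment into a quadratic inequality in $i$, sum over $i$, and finish with an application of AM--GM.

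First I would abbreviate $\alpha:=m_{1}/M_{2}$ and $\beta:=M_{1}/m_{2}$, so that the right-hand side of (\ref{ineqnume}) is $(\sqrt{\beta}-\sqrt{\alpha})^{2}=\alpha+\beta-2\sqrt{\alpha\beta}$. Since $0<m_{1}\leq a_{i}\leq M_{1}$ and $0<m_{2}\leq b_{i}\leq M_{2}$, every index $i$ satisfies $\alpha\leq a_{i}/b_{i}\leq \beta$, which is equivalent to
\begin{equation*}
\left(\frac{a_{i}}{b_{i}}-\alpha\right)\!\left(\frac{a_{i}}{b_{i}}-\beta\right)\leq 0.
\end{equation*}
Multiplying through by $b_{i}^{2}>0$ and summing over $i$ yields
\begin{equation*}
\sum_{i=1}^{n} a_{i}^{2}+\alpha\beta\sum_{i=1}^{n} b_{i}^{2}\leq (\alpha+\beta)\sum_{i=1}^{n} a_{i}b_{i}.
\end{equation*}

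Write $A=\sum a_{i}^{2}$, $B=\sum b_{i}^{2}$, $C=\sum a_{i}b_{i}$, all positive. Dividing the previous inequality by $C$ and rearranging gives $A/C\leq \alpha+\beta-\alpha\beta\,B/C$, and therefore
\begin{equation*}
\frac{A}{C}-\frac{C}{B}\leq \alpha+\beta-\left(\alpha\beta\,\frac{B}{C}+\frac{C}{B}\right).
\end{equation*}
Setting $s:=B/C>0$, the bracketed quantity equals $\alpha\beta\,s+1/s$, and AM--GM gives $\alpha\beta\,s+1/s\geq 2\sqrt{\alpha\beta}$. Combining the two estimates yields
\begin{equation*}
\frac{A}{C}-\frac{C}{B}\leq \alpha+\beta-2\sqrt{\alpha\beta}=(\sqrt{\beta}-\sqrt{\alpha})^{2},
\end{equation*}
which is exactly (\ref{ineqnume}).

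The only non-mechanical step is the first one: spotting that the box constraint on $a_{i}/b_{i}$ should be exploited through the quadratic $(a_{i}/b_{i}-\alpha)(a_{i}/b_{i}-\beta)\leq 0$. Once that trick is in hand, the remaining work is a single rearrangement followed by a one-variable AM--GM, so no additional calculation is required. Tracking the two equality cases shows that equality in (\ref{ineqnume}) forces each $a_{i}/b_{i}\in\{\alpha,\beta\}$ together with $C=B\sqrt{\alpha\beta}$, but this refinement is not needed for the stated bound.
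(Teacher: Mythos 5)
Your argument is correct. Note, however, that the paper does not prove Lemma \ref{numineq} at all: it is imported verbatim from the reference \cite{Pecaric} (Mitrinovi\'c--Pe\v{c}ari\'c--Fink), so there is no in-paper proof to compare against. What you have supplied is the standard P\'olya--Szeg\H{o}/Kantorovich derivation, and every step checks out: the box constraints give $m_{1}/M_{2}\leq a_{i}/b_{i}\leq M_{1}/m_{2}$ (the interval is nonempty since $m_{1}m_{2}\leq M_{1}M_{2}$), the quadratic $(a_{i}/b_{i}-\alpha)(a_{i}/b_{i}-\beta)\leq 0$ multiplied by $b_{i}^{2}$ and summed yields $A+\alpha\beta B\leq(\alpha+\beta)C$ with $A,B,C>0$, and the final AM--GM estimate $\alpha\beta s+1/s\geq 2\sqrt{\alpha\beta}$ closes the gap to $(\sqrt{\beta}-\sqrt{\alpha})^{2}$, which is exactly the right-hand side of (\ref{ineqnume}). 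So your proposal is a complete, elementary, self-contained proof of a statement the authors only cite; it adds content rather than duplicating an existing argument, and your closing remark about the equality case is a correct (if unneeded) bonus.
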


Recall that the first Zagreb index of a graph is defined as $Z_g(G) =\sum\limits_{\substack{i=1}}^{n}d_{i}^{2}$. So, using this invariant we have the next lemma.

\begin{lemma}
\label{grapheq}Let $\mu _{1}\geq \mu _{2}\geq \cdots \geq \mu _{n-1}\geq \mu
_{n}=0$ be the Laplacian eigenvalues of a graph $G$ with $m$ edges and
degrees sequence $d_{1},d_{2},\ldots, d_{n}$. For $i\neq j$ let us denote by $%
\varkappa _{ji}$ the number of vertices at the intersection of the
neighborhoods of $v_{i}$ and $v_{j}$. Then,

\begin{enumerate}

\item $\sum\limits_{\substack{i=1}}^{n-1}\mu_{i}^{2}=Z_g(G)+2m;$\\

\item $ \sum\limits_{\substack{i=1}}^{n-1}\mu _{i}^{4}  =\Upsilon $

\end{enumerate}
where

\begin{equation*}
\Upsilon =\sum \limits_{i=1}^{n}\left(
d_{i}^{4}+3d_{i}^{3}\right)+ Z_{g}(G) +\sum\limits_{i=1}^{n}\left(
\sum\limits_{\substack{s=1\\s\neq i}}^{n}\varkappa _{is}^{2}+\sum\limits_{v_{s}\sim
v_{i}}2d_{i}\left( d_{s}-\varkappa _{si}\right) +d_{s}\left(
d_{s}-2\varkappa _{si}\right)\right).
\end{equation*}

\end{lemma}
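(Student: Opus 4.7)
My approach is to compute the power-trace $\mathrm{tr}(L^k)$ for $k=2,4$ directly from the matrix entries of $L=L(G)$, since
$$\sum_{i=1}^{n-1}\mu_i^k \;=\; \sum_{i=1}^{n}\mu_i^k \;=\; \mathrm{tr}(L^k),$$
where the first equality uses $\mu_n=0$. Writing $L=D-A$, the diagonal of $L$ is $(d_1,\dots,d_n)$ and the $(i,j)$-off-diagonal is $-A_{ij}$, which gives a purely combinatorial reading of every entry of $L$ and of $L^2$.

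For item 1, expanding $\mathrm{tr}(L^2)=\sum_{i,j}L_{ij}^2$ splits into $\sum_i d_i^2 = Z_g(G)$ from the diagonal and $\#\{(i,j)\colon i\sim j\}=2m$ from the off-diagonal $\pm 1$ entries, which is the claimed identity.

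For item 2, I first calculate the entries of $L^2$. For $i\neq j$, separating the indices $k=i$, $k=j$ and $k\neq i,j$ yields
$$(L^2)_{ij}\;=\;-(d_i+d_j)A_{ij} + \sum_{k\neq i,j}A_{ik}A_{kj}\;=\;-(d_i+d_j)A_{ij}+\varkappa_{ij},$$
while $(L^2)_{ii}=d_i^2+d_i$, since row $i$ contains exactly $d_i$ off-diagonal entries equal to $-1$. Because $L^2$ is symmetric, $\mathrm{tr}(L^4)=\sum_{i,j}(L^2)_{ij}^2$, and I split this sum into three pieces: the diagonal, the ordered edge-pairs $i\sim j$, and the ordered non-edge pairs $i\neq j$, $i\not\sim j$. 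The diagonal contributes $\sum_i (d_i^4+2d_i^3+d_i^2)$; the edge and non-edge $\varkappa_{ij}^2$-terms combine into $\sum_i\sum_{s\neq i}\varkappa_{is}^2$; the remaining edge contribution is
$$\sum_i\sum_{s\sim i}\bigl[(d_i+d_s)^2-2(d_i+d_s)\varkappa_{si}\bigr].$$

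The final step is purely algebraic rearrangement. Expanding $(d_i+d_s)^2 = d_i^2 + 2d_id_s + d_s^2$ and using $\sum_{s\sim i}d_i^2 = d_i^3$ lets me absorb one $d_i^3$-term into the diagonal's $2d_i^3$ to produce the $3d_i^3$ coefficient displayed in $\Upsilon$, while the leftover $2d_id_s + d_s^2 - 2(d_i+d_s)\varkappa_{si}$ rewrites as $2d_i(d_s-\varkappa_{si}) + d_s(d_s-2\varkappa_{si})$. Collecting the remaining $\sum_i d_i^2 = Z_g(G)$ from the diagonal then gives exactly the stated $\Upsilon$. The only real obstacle I anticipate is the bookkeeping: carefully tracking ordered versus unordered edge sums, and keeping separate the $\varkappa_{ij}^2$ contributions that arise from edges and from non-edges, so that nothing is double-counted and the reorganization matches $\Upsilon$ term by term.
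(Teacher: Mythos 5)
Your proof is correct and takes essentially the same route as the paper: the paper also obtains item 1 from the Frobenius norm of $L(G)$ and item 2 by writing $\mathrm{tr}(L^4)=\sum_i\Vert L^2e_i\Vert^2$, which is exactly your $\sum_{i,j}(L^2)_{ij}^2$, using the same entrywise formula $(L^2)_{ij}=-(d_i+d_j)A_{ij}+\varkappa_{ij}$ and the same regrouping into the terms of $\Upsilon$.
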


\begin{proof}
The equality in $1$. is obtained from the Frobenius matrix norm
computation of $L\left( G\right) $. For the second
equality note that $\digamma=\sum_{\substack{i=1}}^{n-1}\mu _{i}^{4}=trace\left(
L\left( G\right) ^{4}\right) .$
Let $e_{i}$ be the $i$-th canonical vector of $\mathbb{R}^{n}$. Therefore,
\begin{equation*}
\digamma
=\sum\limits_{\substack{i=1}}^{n}e_{i}^{t}L\left( G\right)
^{4}e_{i}=\sum\limits_{\substack{i=1}}^{n}\left\Vert L\left( G\right)
^{2}e_{i}\right\Vert ^{2},
\end{equation*}
where $\left\Vert \cdot \right\Vert $ stands for the
Euclidean vector norm. Setting $L\left( G\right) =\left( \delta _{ij}\right), $
then
\begin{equation*}
\delta _{ij}=\left\{
\begin{tabular}{ll}
$d_{i}$ &  if\ $i=j$ \\
$-1$ & if $v_{i}\sim v_{j}$%
\end{tabular}%
\right.
.\end{equation*}%
Since,
$$L\left( G\right) e_{i}=\left( \delta _{1i},\ldots ,\delta
_{ni}\right) ^{t},$$ we have
{\footnotesize
\begin{eqnarray*}
L\left( G\right) ^{2}e_{i}&= & \left( \sum\limits_{\substack{t=1}}^{n}\delta _{t1}\delta _{ti},\ldots
,\sum\limits_{\substack{t=1}}^{n}\delta _{ti}\delta _{ti},\ldots
,\sum\limits_{\substack{t=1}}^{n}\delta _{tn}\delta _{ti}\right) ^{t} \\
&=&\left( \left( d_{1}+d_{i}\right) \delta _{1i}+\sum\limits_{\substack{t=2 \\ t\neq
i}}^{n}\delta _{t1}\delta _{ti},\ldots ,d_{i}+d_{i}^{2},\ldots ,\left(
d_{n}+d_{i}\right) \delta _{ni}+\sum\limits_{\substack{t=1 \\ t\neq i,n}}^{n}\delta
_{t1}\delta _{ti}\right) ^{t} \\
&=&\left( \left( d_{1}+d_{i}\right) \delta _{1i}+\varkappa _{1i},\ldots
,d_{i}+d_{i}^{2},\ldots ,\left( d_{n}+d_{i}\right) \delta _{ni}+\varkappa
_{ni}\right) ^{t}.
\end{eqnarray*}
}
Therefore,
{\footnotesize
\begin{eqnarray*}
\left\Vert L\left( G\right) ^{2}e_{i}\right\Vert ^{2}&=& \left(
d_{i}+d_{i}^{2}\right) ^{2}+\sum\limits_{\substack{s=1 \\ s\neq i}}^{n}\left( \left(
d_{s}+d_{i}\right) \delta _{si}+\varkappa _{si}\right) ^{2}\\
&=& \left(d_{i}+d_{i}^{2}\right) ^{2}+\sum\limits_{\substack{s=1 \\s\neq i}}^{n}\left( \varkappa
_{is}^{2}+\delta _{si}^{2}\left( d_{i}+d_{s}\right) ^{2}+2\left(
d_{s}+d_{i}\right) \delta _{si}\varkappa _{si}\right) \\
&=&\left( d_{i}+d_{i}^{2}\right) ^{2}+\sum\limits_{\substack{s=1 \\ s\neq i}}^{n}\varkappa
_{is}^{2}+\sum\limits_{\substack{s=1 \\ s\neq i}}^{n}\delta _{si}^{2}\left(
d_{i}^{2}+2d_{i}d_{s}+d_{s}^{2}\right) +2\sum\limits_{\substack{ s=1\\ s\neq
i}}^{n}\left( d_{s}+d_{i}\right) \delta _{si}\varkappa _{si}\\
&=&\left( d_{i}+d_{i}^{2}\right) ^{2}+\sum\limits_{\substack{s=1 \\ s\neq i}}^{n}\varkappa
_{is}^{2}+d_{i}^{3}+2d_{i}\sum\limits_{\substack{ v_{s}\sim
v_{i}}}d_{s}+\sum\limits_{\substack{ v_{s}\sim v_{i}}}d_{s}^{2}-2\sum\limits_{\substack{v_{s}\sim
v_{i}}}d_{s}\varkappa _{si}-2d_{i}\sum\limits_{\substack{v_{s}\sim v_{i}}}\varkappa
_{si}\\
&=&d_{i}^{2}+3d_{i}^{3}+d_{i}^{4}+\sum\limits_{\substack{s=1 \\ s\neq i}}^{n}\varkappa
_{is}^{2}+2d_{i}\sum\limits_{\substack{ v_{s}\sim v_{i}}}\left( d_{s}-\varkappa
_{si}\right) +\sum\limits_{\substack{v_{s}\sim
v_{i}}}d_{s}^{2}-2\sum\limits_{\substack{v_{s}\sim v_{i}}}d_{s}\varkappa _{si}.
\end{eqnarray*}
}
\end{proof}

As a consequence of the previous two lemmas we have the next result.

\begin{theorem} \label{first}
Let $\mu _{1}\geq \mu _{2}\geq \cdots \geq \mu _{n-1}\geq \mu _{n}=0$ be the
Laplacian eigenvalues of a graph $G$ with $m$ edges and degrees sequence $
d_{1},d_{2},\ldots, d_{n}$. For $i\neq j$, let $\varkappa _{ji}$
the number of vertices at the intersection of the neighborhoods of $v_{i}$
and $v_{j}$.
Then \begin{equation*}
\left(\frac{\Upsilon (n-1)-(Z_g(G)+2m)^{2}}{(n-1) (Z_g(G)+2m)}\right)^{\tfrac{1}{2}}\leq S_{L}(G).
\end{equation*}

\end{theorem}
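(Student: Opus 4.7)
The plan is to apply the numerical inequality of Lemma \ref{numineq} directly to the positive Laplacian eigenvalues $\mu_1,\ldots,\mu_{n-1}$ of $G$, with a choice of auxiliary vectors that simultaneously produces the two moments computed in Lemma \ref{grapheq} on the left-hand side and $S_L(G)^2$ on the right-hand side. This packages the whole statement into a single substitution.

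Concretely, I would replace $n$ by $n-1$ in Lemma \ref{numineq} and take $a_i = \mu_i^2$ and $b_i = 1$ for $i=1,\ldots,n-1$. Then Lemma \ref{grapheq}(2) gives $\sum_{i=1}^{n-1} a_i^2 = \Upsilon$, Lemma \ref{grapheq}(1) gives $\sum_{i=1}^{n-1} a_i b_i = Z_g(G)+2m$, and $\sum_{i=1}^{n-1} b_i^2 = n-1$. The admissible bounds are $m_1 = \mu_{n-1}^2$, $M_1 = \mu_1^2$, $m_2 = M_2 = 1$, so the right-hand side of (\ref{ineqnume}) specializes to $\bigl(\sqrt{\mu_1^2}-\sqrt{\mu_{n-1}^2}\bigr)^2 = (\mu_1-\mu_{n-1})^2 = S_L(G)^2$. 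Feeding everything into (\ref{ineqnume}) yields
\begin{equation*}
\frac{\Upsilon}{Z_g(G)+2m} \;-\; \frac{Z_g(G)+2m}{n-1} \;\leq\; S_L(G)^2,
\end{equation*}
and a common denominator rewrites the left-hand side as the expression inside the square root in the theorem.

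The only delicate point is the square-root extraction at the end: to pass from $S_L(G)^2 \geq \bigl(\Upsilon(n-1)-(Z_g(G)+2m)^2\bigr)/\bigl((n-1)(Z_g(G)+2m)\bigr)$ to the stated bound, I need the right-hand side to be non-negative. This follows from the Cauchy--Schwarz inequality applied to the $(n-1)$-vectors $(\mu_i^2)$ and $(1,\ldots,1)$, which gives $(Z_g(G)+2m)^2 = \bigl(\sum_{i=1}^{n-1}\mu_i^2\bigr)^2 \leq (n-1)\sum_{i=1}^{n-1}\mu_i^4 = (n-1)\Upsilon$. I would also note implicitly that $G$ is taken to be connected, so that $\mu_{n-1}>0$ and Lemma \ref{numineq} applies with strictly positive entries $a_i$. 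Beyond verifying these positivity conditions, the argument is a clean substitution with no further computation required.
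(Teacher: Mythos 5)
Your proposal is correct and follows essentially the same route as the paper: apply Lemma \ref{numineq} with $a_i=\mu_i^2$, $b_i=1$ for $i=1,\ldots,n-1$ and the bounds $m_1=\mu_{n-1}^2$, $M_1=\mu_1^2$, $m_2=M_2=1$, then substitute the two trace identities of Lemma \ref{grapheq}. Your added remarks on the non-negativity of the left-hand side (via Cauchy--Schwarz) and on connectedness for the positivity hypothesis of the lemma are careful points the paper leaves implicit, but they do not change the argument.
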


\begin{proof}
In this proof we use Lemma \ref{numineq} replacing $a_{i}=\mu _{i}^{2}$
and $b_{i}=1$, for $i=1,\ldots ,n-1$. Note that,
\begin{equation*}
\mu _{n-1}^{2}\leq \mu _{i}^{2}\leq \mu _{1}^{2}\quad \left( i=1,\ldots
,n-1\right).
\end{equation*}
Moreover, $m_{1}=\mu _{n-1}^{2}$, $M_{1}=\mu _{1}^{2}$,\ and $m_{2}=1=M_{2}.$
Replacing in the inequality (\ref{ineqnume}) we obtain
\begin{equation*}
\frac{\sum\limits_{\substack{i=1}}^{n-1}\mu_{i}^{4}}{\sum\limits_{\substack{i=1}}^{n-1}\mu_{i}^{2}}-\frac{\sum\limits_{\substack{i=1}}^{n-1}\mu _{i}^{2}}{n-1}\leq \left( \left(
\mu _{1}^{2}\right)^{\frac{1}{2}}-\left(\mu _{n-1}^{2}\right) ^{\frac{1}{2}%
}\right) ^{2}.
\end{equation*}
Using identities $1.$ and $2.$ in Lemma \ref{grapheq} the result follows.
\end{proof}

Another interesting numerical inequality was presented in  \cite{Pachpatte}.

\begin{lemma}  \cite{Pachpatte}
\label{pachpa}Let $a\leq a_{i}\leq A$ and $b\leq b_{i}\leq B$, $\left(
i=1,\ldots ,n\right) $. Moreover consider, $t_{i}\geq 0$, $\left( i=1,\ldots
,n\right) $ and $T=\sum\limits_{\substack{i=1}}^{n}t_{i}$. Then
\begin{equation}
\left\vert \frac{1}{T}\sum\limits_{\substack{i=1}}^{n}t_{i}a_{i}b_{i}-\frac{1}{T^{2}}%
\sum\limits_{\substack{i=1}}^{n}t_{i}a_{i}\sum\limits_{\substack{i=1}}^{n}t_{i}b_{i}\right\vert
\leq \frac{1}{4}\left( A-a\right) \left( B-b\right) .  \label{numerique2}
\end{equation}
\end{lemma}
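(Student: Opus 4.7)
The plan is to recognize the displayed inequality as a weighted Gr\"uss-type bound on the covariance of two bounded sequences, and then to prove it in two standard steps: a Cauchy--Schwarz bound on the covariance by the product of standard deviations, followed by a Popoviciu-type bound on each standard deviation by half the range.

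First, I would normalize the weights by setting $p_{i}=t_{i}/T$, so that $p_{i}\geq 0$ and $\sum_{i}p_{i}=1$. Writing $E[x]:=\sum_{i}p_{i}x_{i}$ for the expectation with respect to these weights, the inequality to be proved becomes
\begin{equation*}
\left|E[ab]-E[a]\,E[b]\right|\leq \tfrac{1}{4}(A-a)(B-b).
\end{equation*}
Setting $\widetilde{a}_{i}=a_{i}-E[a]$ and $\widetilde{b}_{i}=b_{i}-E[b]$, the left-hand side equals $|E[\widetilde{a}\,\widetilde{b}]|$. Applying the Cauchy--Schwarz inequality in the inner product $\langle x,y\rangle =\sum_{i}p_{i}x_{i}y_{i}$ gives $|E[\widetilde{a}\,\widetilde{b}]|\leq \sqrt{\mathrm{Var}(a)\,\mathrm{Var}(b)}$, where the variances are the weighted ones with respect to the $p_i$.

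Second, I would bound each weighted variance by a quarter of the squared range. Since $a\leq a_{i}\leq A$, the quantity $(A-a_{i})(a_{i}-a)$ is nonnegative for every $i$; taking its expectation, expanding, and rearranging yields
\begin{equation*}
\mathrm{Var}(a)=E[a^{2}]-E[a]^{2}\leq (A-E[a])(E[a]-a)\leq \frac{(A-a)^{2}}{4},
\end{equation*}
where the last inequality is the AM--GM estimate applied to the two nonnegative factors whose sum is $A-a$. The analogous bound holds for $b$, and combining the two produces the required $\tfrac{1}{4}(A-a)(B-b)$.

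The only slightly delicate ingredient, and hence the main obstacle, is the Popoviciu-style variance bound; the rest of the plan is a direct application of Cauchy--Schwarz once the problem has been recast in probabilistic language. A more symmetric alternative would be to expand the weighted covariance via the Korkine-type identity $\tfrac{1}{2}\sum_{i,j}p_{i}p_{j}(a_{i}-a_{j})(b_{i}-b_{j})$ and then apply Cauchy--Schwarz to the double sum; this route yields the same bound with slightly more bookkeeping but avoids an explicit appeal to Popoviciu.
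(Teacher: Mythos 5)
Your proof is correct. Note, however, that the paper does not prove this lemma at all: it is quoted verbatim as a known numerical inequality from Pachpatte's book \cite{Pachpatte}, so there is no in-paper argument to compare against. What you have written is a complete and self-contained proof of the discrete weighted Gr\"uss inequality along the classical lines: normalizing the weights to $p_i=t_i/T$ (which implicitly, and harmlessly, assumes $T>0$, as the statement itself requires in order to divide by $T$), identifying the left-hand side as a weighted covariance $|E[\widetilde a\,\widetilde b\,]|$, bounding it by $\sqrt{\mathrm{Var}(a)\,\mathrm{Var}(b)}$ via Cauchy--Schwarz in the weighted inner product, and then controlling each variance by Popoviciu's bound, which you correctly derive from the nonnegativity of $E[(A-a_i)(a_i-a)]$ followed by AM--GM on the two factors $(A-E[a])$ and $(E[a]-a)$, whose sum is $A-a$. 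All steps check out: the identity $E[\widetilde a\,\widetilde b\,]=E[ab]-E[a]E[b]$, the expansion giving $\mathrm{Var}(a)\le(A-E[a])(E[a]-a)$, and the final combination yielding $\tfrac14(A-a)(B-b)$ are each correct. The alternative Korkine-identity route you sketch would also work. The only cosmetic quibble is the notational collision between the scalar bounds $a,b$ and the sequences $(a_i),(b_i)$ inside expressions like $E[a]-a$, which you inherit from the lemma's own statement; it does not affect the validity of the argument.
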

Then, applying this result we can obtain the next lower bound for the Laplacian spread of a graph with a prescribed degree sequence.
\begin{theorem}
\label{Th2}Let $\mu _{1}\geq \mu _{2}\geq \cdots \geq \mu _{n-1}\geq \mu
_{n}=0$ be the Laplacian eigenvalues of a graph $G$ with $m$ edges and
degrees sequence $d_{1},d_{2},\ldots, d_{n}$. For $i\neq j$, let us denote by $%
\varkappa _{ji}$ the number of vertices at the intersection of the
neighborhoods of $v_{i}$ and $v_{j}$. Then%
\begin{equation*}
\frac{2}{n-1}\left( \left( n-1\right) \left(
Z_{g}(G)+2m\right) -4m^{2}\right) ^{\frac{1}{2}}\leq
S_{L}(G).
\end{equation*}
\end{theorem}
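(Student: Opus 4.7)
The plan is to apply Pachpatte's inequality (Lemma \ref{pachpa}) with the Laplacian spectrum as the data, choosing all weights equal and both vectors equal to the Laplacian eigenvalue vector. Concretely, I would set $n$ in Lemma \ref{pachpa} to $n-1$, take $t_i=1$ (so that $T=n-1$), and put $a_i=b_i=\mu_i$ for $i=1,\ldots,n-1$. Under this choice the constants become $a=b=\mu_{n-1}$ and $A=B=\mu_1$, so the right-hand side of (\ref{numerique2}) is $\tfrac{1}{4}(\mu_1-\mu_{n-1})^2=\tfrac{1}{4}S_L(G)^2$.

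Next I would identify the two moments that appear on the left-hand side. The second moment $\sum_{i=1}^{n-1}\mu_i^2$ is exactly $Z_g(G)+2m$ by part 1 of Lemma \ref{grapheq}. The first moment $\sum_{i=1}^{n-1}\mu_i$ equals $\operatorname{trace}(L(G))=\sum_{v\in V(G)}d(v)=2m$, since $\mu_n=0$. Substituting these values, the inequality produced by Lemma \ref{pachpa} reads
\begin{equation*}
\left|\,\frac{Z_g(G)+2m}{n-1}-\frac{4m^2}{(n-1)^2}\,\right|\leq \frac{1}{4}S_L(G)^2.
\end{equation*}

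Before dropping the absolute value, I would note that the quantity inside is non-negative: this is a direct consequence of the Cauchy–Schwarz inequality applied to $(\mu_1,\ldots,\mu_{n-1})$ and $(1,\ldots,1)$, which gives $(2m)^2=\bigl(\sum_{i=1}^{n-1}\mu_i\bigr)^2\leq (n-1)\sum_{i=1}^{n-1}\mu_i^2=(n-1)(Z_g(G)+2m)$. Multiplying the resulting inequality by $4(n-1)^2$ and taking the square root yields
\begin{equation*}
\frac{2}{n-1}\bigl((n-1)(Z_g(G)+2m)-4m^2\bigr)^{\frac{1}{2}}\leq S_L(G),
\end{equation*}
which is the claimed bound.

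There is no real obstacle here; the proof is essentially a bookkeeping exercise. The only small subtlety is the non-negativity check needed to remove the absolute value before taking the square root, and the careful reindexing so that the Pachpatte estimate is applied to the $n-1$ nonzero Laplacian eigenvalues rather than to all $n$ of them (otherwise the identity $\sum\mu_i=2m$ would still hold but $\mu_{n}=0$ would falsely appear as the lower constant $a$, weakening the bound).
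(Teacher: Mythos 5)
Your proposal is correct and follows essentially the same route as the paper: both apply Lemma \ref{pachpa} with $a_i=b_i=\mu_i$ over the $n-1$ eigenvalues $\mu_1,\ldots,\mu_{n-1}$ and uniform weights (your $t_i=1$ versus the paper's $t_i=\tfrac{1}{n-1}$ give the identical left-hand side), then substitute $\sum\mu_i=2m$ and $\sum\mu_i^2=Z_g(G)+2m$ from Lemma \ref{grapheq}. Your explicit Cauchy--Schwarz check that the quantity inside the absolute value is non-negative is a small point of added rigor that the paper leaves implicit.
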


\begin{proof}
In this proof we use Lemma \ref{pachpa} replacing $a_{i}=b_{i}=\mu _{i}$,
$\left( i=1,\ldots ,n-1\right) $. Moreover, $t_{i}=\frac{1}{n-1},$ $\left(
i=1,\ldots ,n-1\right) $.\ Since,%
\begin{equation*}
\mu _{n-1}\leq \mu _{i}\leq \mu _{1}\quad \left( i=1,\ldots ,n-1\right),
\end{equation*}%
we have, $a=b=\mu _{n-1}$ and $A=B=\mu _{1}$. Therefore, from inequality (\ref{numerique2}) we obtain
\begin{equation*}
\left\vert \frac{1}{n-1}\sum\limits_{\substack{i=1}}^{n-1}\mu _{i}^{2}-\frac{1}{\left(
n-1\right) ^{2}}\left( \sum\limits_{\substack{i=1}}^{n-1}\mu _{i}\right)
^{2}\right\vert \leq \frac{1}{4}\left( \mu _{1}-\mu _{n-1}\right) ^{2}.
\end{equation*}%
From identity $1.$ in Lemma \ref{grapheq},
\begin{equation*}
\frac{2}{n-1}\left( \left( n-1\right) \left(
Z_g(G)+2m\right) -4m^{2}\right) ^{\frac{1}{2}}\leq
\mu _{1}-\mu _{n-1}
\end{equation*}%
and the result follows.
\end{proof}

We observe that the above result was also obtained in \cite{Liu}.

Let us denote by $\mathbb{J}_{p}$ the all ones matrix of order $p.$

The next theorem, due to Brauer, relates the eigenvalues of an arbitrary matrix
and the matrix resulting from it after a rank one additive perturbation.

\begin{theorem}
\cite{Brauer} \label{ber copy(1)} Let $M$ be an arbitrary $n\times n$ matrix
with eigenvalues $\theta _{1},\ldots ,\theta _{n}$. Let $\mathbf{x}_{k}$ be
an eigenvector of $M$ associated with the eigenvalue $\theta _{k}$, and let $
\mathbf{q}$ be an arbitrary $n$-dimensional vector. Then the matrix $M+
\mathbf{x}_{k}\mathbf{q}^{T}$ has eigenvalues $\theta _{1},\ldots ,\theta
_{k-1},\theta _{k}+\mathbf{x}_{k}^{T}\mathbf{q},\theta _{k+1},\ldots ,\theta
_{n}.$
\end{theorem}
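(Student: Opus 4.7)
The plan is to handle Brauer's theorem in two stages: first exhibit one eigenvector of the perturbed matrix $M+\mathbf{x}_{k}\mathbf{q}^{T}$ by direct substitution, and then use a Schur-type triangularization adapted to $\mathbf{x}_{k}$ to recover the remaining $n-1$ eigenvalues. For the first step I would simply compute
\begin{equation*}
(M+\mathbf{x}_{k}\mathbf{q}^{T})\mathbf{x}_{k} = M\mathbf{x}_{k} + \mathbf{x}_{k}(\mathbf{q}^{T}\mathbf{x}_{k}) = (\theta_{k}+\mathbf{x}_{k}^{T}\mathbf{q})\mathbf{x}_{k},
\end{equation*}
using $M\mathbf{x}_{k}=\theta_{k}\mathbf{x}_{k}$ together with the fact that $\mathbf{q}^{T}\mathbf{x}_{k}$ is a scalar equal to $\mathbf{x}_{k}^{T}\mathbf{q}$. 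This exhibits $\theta_{k}+\mathbf{x}_{k}^{T}\mathbf{q}$ as an eigenvalue and suggests that the rest of the spectrum should become transparent in a basis where $\mathbf{x}_{k}$ is distinguished.

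The second step is to construct an invertible matrix $P$ whose first column is $\mathbf{x}_{k}$ and such that $T:=P^{-1}MP$ is upper triangular. I would do this following the inductive proof of Schur's triangularization theorem: extend $\mathbf{x}_{k}$ to a basis of $\mathbb{C}^{n}$, which already produces a block form $\left(\begin{smallmatrix}\theta_{k} & \ast \\ 0 & M'\end{smallmatrix}\right)$, and then triangularize the $(n-1)\times(n-1)$ block $M'$ inductively. By construction $T$ has $\theta_{k}$ in position $(1,1)$ and the remaining eigenvalues $\theta_{i}$, $i\neq k$, along its lower-right diagonal. Conjugating the perturbation by $P$ gives
\begin{equation*}
P^{-1}(M+\mathbf{x}_{k}\mathbf{q}^{T})P = T + (P^{-1}\mathbf{x}_{k})(\mathbf{q}^{T}P) = T + \mathbf{e}_{1}\mathbf{r}^{T},
\end{equation*}
where $\mathbf{e}_{1}$ is the first canonical vector (since $P\mathbf{e}_{1}=\mathbf{x}_{k}$ forces $P^{-1}\mathbf{x}_{k}=\mathbf{e}_{1}$) and $\mathbf{r}^{T}:=\mathbf{q}^{T}P$. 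The rank-one update $\mathbf{e}_{1}\mathbf{r}^{T}$ modifies only the first row, so $T+\mathbf{e}_{1}\mathbf{r}^{T}$ is still upper triangular. Its diagonal entries are therefore its eigenvalues, and they read: $\theta_{k}+r_{1}$ in position $(1,1)$ and the untouched $\theta_{i}$, $i\neq k$, elsewhere. Since $r_{1}=\mathbf{q}^{T}P\mathbf{e}_{1}=\mathbf{q}^{T}\mathbf{x}_{k}=\mathbf{x}_{k}^{T}\mathbf{q}$, this yields exactly the spectrum claimed in the statement.

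The main obstacle is not any single calculation but the verification in the second step that the similarity $P$ can be chosen so that its first column is \emph{prescribed} to be $\mathbf{x}_{k}$ while still triangularizing $M$. The standard formulation of Schur's theorem only guarantees existence of some triangularizing basis, so one must rework the inductive argument, starting it by splitting off the given eigenpair $(\theta_{k},\mathbf{x}_{k})$ and triangularizing what remains. Once this adapted Schur step is in place, everything else is a direct computation.
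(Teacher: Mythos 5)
Your argument is correct and complete. Note that the paper does not prove this statement at all: it is quoted as a known result with a citation to Brauer's 1952 article, so there is no in-paper proof to compare against. Your two-step route --- first verifying directly that $\mathbf{x}_{k}$ is an eigenvector of $M+\mathbf{x}_{k}\mathbf{q}^{T}$ with eigenvalue $\theta_{k}+\mathbf{x}_{k}^{T}\mathbf{q}$, then conjugating by a triangularizing similarity $P$ whose first column is prescribed to be $\mathbf{x}_{k}$ so that the rank-one update $\mathbf{e}_{1}\mathbf{r}^{T}$ only touches the first row of an upper triangular matrix --- is sound, and you correctly identify and resolve the one delicate point, namely that the Schur-type induction must be started from the given eigenpair $(\theta_{k},\mathbf{x}_{k})$ so that the remaining diagonal entries of $T$ are exactly $\theta_{i}$, $i\neq k$ (this follows since the block form $\left(\begin{smallmatrix}\theta_{k} & \ast \\ 0 & M'\end{smallmatrix}\right)$ forces the characteristic polynomial of $M'$ to be $\prod_{i\neq k}(\lambda-\theta_{i})$, and subsequent conjugations by $\mathrm{diag}(1,Q)$ preserve the first column of $P$). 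For comparison, the most common textbook proof instead uses the determinant identity $\det\bigl(\lambda I-M-\mathbf{x}_{k}\mathbf{q}^{T}\bigr)=\det(\lambda I-M)\bigl(1-\mathbf{q}^{T}(\lambda I-M)^{-1}\mathbf{x}_{k}\bigr)$ together with $(\lambda I-M)^{-1}\mathbf{x}_{k}=(\lambda-\theta_{k})^{-1}\mathbf{x}_{k}$, which gives the new characteristic polynomial $p(\lambda)\,\frac{\lambda-\theta_{k}-\mathbf{q}^{T}\mathbf{x}_{k}}{\lambda-\theta_{k}}$ in one line; your triangularization argument is slightly longer but avoids resolvents and works uniformly without worrying about $\lambda=\theta_{k}$ being a removable singularity.
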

Let $\phi$ be a real number and consider the matrix
\begin{equation}
M=L(G)+\frac{\phi }{n}\mathbb{J}_{n},  \label{mM}
\end{equation}
obtained after applying Theorem \ref{ber copy(1)} replacing $M$ by
$L\left( G\right) $ and the eigenpair $\left( \theta _{k},\mathbf{q}\right) $
of $M$ with the eigenpair $\left( 0,\mathbf{e}\right) $ of $L\left( G\right)
$. Thus, the matrix $L(G)+\frac{\phi }{n}\mathbb{J}_{n}$ has eigenvalues $
\mu _{1},\ldots ,\mu _{k-1},0+\frac{\phi }{n}\mathbf{e}^{t}\mathbf{e},\mu
_{k+1},\ldots ,\mu _{n-1}.~\ $
Then, if $\mu _{1}\geq \phi \geq \mu _{n-1}$ the matricial spread of $M$, $S\left( M\right) $ and the Laplacian spread of
$G$,\ coincide.

Applying Lemma \ref{numineq} to the spectrum of the matrix $M$ in (\ref{mM}) we get
the following result.

\begin{theorem}
\label{Th1_1}Let $\mu _{1}\geq \mu _{2}\geq \ldots \geq \mu _{n-1}\geq \mu
_{n}=0$ be the Laplacian eigenvalues of a graph $G$ with $m$ edges and
degrees sequence $d_{1},d_{2},\ldots d_{n}$. Let $\mu _{n-1}\leq \phi \leq
\mu _{1}.$\ \ For $i\neq j$, let us denote by $\varkappa _{ji}$ the number of
vertices at the intersection of the neighborhoods of $v_{i}$ and $v_{j}$.
Then

\begin{equation*}
\left(\frac{n(\Upsilon +\phi ^{4})-(Z_{g}(G)+2m+\phi ^{2})^2 }{n(Z_{g}(G)+2m+\phi ^{2})} \right)^{\frac{1}{2}}\leq S_{L}(G).
\end{equation*}

\end{theorem}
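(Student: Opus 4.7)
The plan is to mimic the construction already used in the paragraph preceding the theorem, combining Brauer's rank-one perturbation (Theorem \ref{ber copy(1)}) with the numerical inequality of Lemma \ref{numineq}, but now applied to the full $n$-point spectrum of the perturbed matrix. First, I would form $M=L(G)+\tfrac{\phi}{n}\mathbb{J}_n$ as in (\ref{mM}); by Brauer's theorem applied to the Laplacian eigenpair $(0,\mathbf e)$, the spectrum of $M$ is exactly $\{\mu_1,\mu_2,\ldots,\mu_{n-1},\phi\}$. Since $\mu_{n-1}\leq \phi\leq \mu_1$ by hypothesis, $\mu_1$ and $\mu_{n-1}$ remain the extremes, so $S(M)=\mu_1-\mu_{n-1}=S_L(G)$ and, after squaring, the full spectrum of $M$ lies in $[\mu_{n-1}^2,\mu_1^2]$.

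Next, I would invoke Lemma \ref{numineq} with the $n$-vectors $a_i=\theta_i(M)^2$ and $b_i=1$. Then $m_2=M_2=1$, $m_1=\mu_{n-1}^2$, $M_1=\mu_1^2$, and the right-hand side of (\ref{ineqnume}) collapses to $(\mu_1-\mu_{n-1})^2=S_L(G)^2$. The key computations are the power sums of $\sigma(M)$: using part~1 of Lemma \ref{grapheq},
\[
\sum_{i=1}^{n}\theta_i(M)^2=\sum_{i=1}^{n-1}\mu_i^2+\phi^2=Z_g(G)+2m+\phi^2,
\]
and using part~2,
\[
\sum_{i=1}^{n}\theta_i(M)^4=\sum_{i=1}^{n-1}\mu_i^4+\phi^4=\Upsilon+\phi^4,
\]
while $\sum_{i=1}^{n}b_i^2=n$. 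Substituting these three sums into (\ref{ineqnume}) gives
\[
\frac{\Upsilon+\phi^4}{Z_g(G)+2m+\phi^2}-\frac{Z_g(G)+2m+\phi^2}{n}\leq S_L(G)^2,
\]
and clearing the common denominator $n(Z_g(G)+2m+\phi^2)$ followed by a square root yields the stated inequality.

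The proof is essentially bookkeeping once the perturbation trick is in place; the only point that requires some care is verifying that the hypothesis $\mu_{n-1}\leq \phi\leq \mu_1$ preserves both the spread (so that $S(M)=S_L(G)$) and the positivity condition $m_1>0$ of Lemma \ref{numineq} (which implicitly requires $G$ to be connected and $\phi>0$, so that $\mu_{n-1}^2>0$). The reason this bound refines Theorem \ref{first} is that incorporating the free parameter $\phi$ enlarges the spectrum to $n$ points and introduces the extra terms $\phi^2$ and $\phi^4$, which one can then tune within $[\mu_{n-1},\mu_1]$.
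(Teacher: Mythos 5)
Your proposal is correct and follows essentially the same route as the paper: the authors likewise apply Lemma \ref{numineq} to the $n$-point spectrum $\{\mu_1,\ldots,\mu_{n-1},\phi\}$ of the perturbed matrix $M=L(G)+\tfrac{\phi}{n}\mathbb{J}_n$ with $a_i=\theta_i(M)^2$, $b_i=1$, and then substitute the power sums from Lemma \ref{grapheq}. Your closing remark about the strict positivity hypothesis $m_1>0$ (hence connectedness and $\phi>0$) is a point of care the paper itself glosses over.
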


\begin{proof}
In this proof we use Lemma \ref{numineq} replacing $a_{i}=\mu _{i}^{2},$ $b_{i}=1$, for $i=1,\ldots ,n-1$ and setting $a_{n}=\phi ,$ as $\mu
_{n-1}\leq \phi \leq \mu _{1}$.\ Note that,%
\begin{equation*}
\mu _{n-1}^{2}\leq \phi^{2}\leq \mu _{1}^{2}\quad \left( i=1,\ldots
,n\right).
\end{equation*}
Moreover, $m_{1}=\mu _{n-1}^{2}$, $M_{1}=\mu _{1}^{2}$,\ and $m_{2}=1=M_{2}$%
. Replacing these values in inequality (\ref{ineqnume}) we have
\begin{equation*}
\frac{\phi ^{4}+\sum\limits_{\substack{i=1}}^{n-1}\mu _{i}^{4}}{\phi
^{2}+\sum\limits_{\substack{i=1}}^{n-1}\mu _{i}^{2}}-\frac{\phi
^{2}+\sum\limits_{\substack{i=1}}^{n-1}\mu _{i}^{2}}{n}\leq \left( \left( \mu
_{1}^{2}\right) ^{\frac{1}{2}}-\left( \mu _{n-1}^{2}\right) ^{\frac{1}{2}%
}\right) ^{2}.
\end{equation*}%
Usiing identities $1.\ $and $2$. in Lemma \ref{grapheq} the result follows.
\end{proof}

Applying Lemma \ref{pachpa} to the matrix $M$ in (\ref{mM}) we get the following
inequality.

\begin{theorem}
\label{Th1_2}Let $\mu _{1}\geq \mu _{2}\geq \ldots \geq \mu _{n-1}\geq \mu
_{n}=0$ be the Laplacian eigenvalues of a graph $G$ with $m$ edges and
degrees sequence $d_{1},d_{2},\ldots d_{n}$. Let $\mu _{n-1}\leq \phi \leq
\mu _{1}.\ $Then%
\begin{equation*}
\frac{2}{n}\left\vert n Z_{g}(G)+2mn+\left( n-1\right)
\phi ^{2}-4m\left( m+\phi \right) \right\vert ^{\frac{1}{2}}\leq S_{L}(G).
\end{equation*}
\end{theorem}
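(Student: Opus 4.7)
The plan is to mirror the proof of Theorem~\ref{Th2}, but applied to the augmented matrix $M = L(G) + \frac{\phi}{n}\mathbb{J}_n$ introduced in \eqref{mM}, which by the Brauer perturbation (Theorem~\ref{ber copy(1)}) has spectrum $\{\mu_1,\ldots,\mu_{n-1},\phi\}$. Since the hypothesis $\mu_{n-1}\leq \phi\leq \mu_1$ guarantees that $\phi$ lies between the extreme Laplacian eigenvalues of $G$, the ordinary matrix spread $S(M)=\mu_1-\mu_{n-1}$ equals $S_L(G)$, so any bound on the spread of the spectrum of $M$ is a bound on $S_L(G)$.

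First I would invoke Lemma~\ref{pachpa} with $n$ replaced by $n$, setting $a_i=b_i=\mu_i$ for $i=1,\ldots,n-1$ and $a_n=b_n=\phi$, together with the uniform weights $t_i=\tfrac{1}{n}$ (so $T=1$). Under the constraint $\mu_{n-1}\leq\phi\leq\mu_1$ one may take $a=b=\mu_{n-1}$ and $A=B=\mu_1$, so the right-hand side of \eqref{numerique2} becomes $\tfrac14(\mu_1-\mu_{n-1})^2=\tfrac14 S_L(G)^2$. This yields
\begin{equation*}
\left|\,\frac{1}{n}\Bigl(\sum_{i=1}^{n-1}\mu_i^2+\phi^2\Bigr)-\frac{1}{n^2}\Bigl(\sum_{i=1}^{n-1}\mu_i+\phi\Bigr)^{2}\right|\leq \frac{1}{4}\,S_L(G)^{2}.
\end{equation*}

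Next I would substitute the identities $\sum_{i=1}^{n-1}\mu_i=\mathrm{trace}(L(G))=2m$ and $\sum_{i=1}^{n-1}\mu_i^2=Z_g(G)+2m$ from item~1 of Lemma~\ref{grapheq}, obtaining a bound in the two symbols $m$ and $\phi$. Multiplying through by $n^2$ and expanding $(2m+\phi)^2=4m^2+4m\phi+\phi^2$ gives
\begin{equation*}
\bigl|\,nZ_g(G)+2mn+(n-1)\phi^{2}-4m(m+\phi)\,\bigr|\leq \frac{n^{2}}{4}\,S_L(G)^{2},
\end{equation*}
and taking square roots yields exactly the claimed inequality after rearranging the factor $\tfrac{2}{n}$.

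No serious obstacle is expected: the only genuine content is Brauer's observation that shifting the isolated eigenvalue $0$ to $\phi\in[\mu_{n-1},\mu_1]$ leaves the spread unchanged, which is what justifies applying Pachpatte's inequality to the full $n$-term list $(\mu_1,\ldots,\mu_{n-1},\phi)$ rather than only to the $(n-1)$-term list of positive Laplacian eigenvalues. The remainder is an algebraic simplification of $(2m+\phi)^2$ to match the stated expression $4m(m+\phi)+\phi^2$; the absolute value is retained because the polynomial in $\phi$ on the left need not be sign-constrained over the admissible interval.
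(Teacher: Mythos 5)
Your proposal is correct and follows essentially the same route as the paper: the same application of Lemma \ref{pachpa} with $a_i=b_i=\mu_i$, $a_n=b_n=\phi$, $t_i=\tfrac1n$, the same substitution of identity 1 of Lemma \ref{grapheq} (together with $\sum_i\mu_i=2m$), and the same algebraic expansion of $(2m+\phi)^2$. The only cosmetic difference is that you make explicit the justification, via the Brauer perturbation, for why the $n$-term list $(\mu_1,\ldots,\mu_{n-1},\phi)$ is the natural object; the paper states this just before the theorem rather than inside the proof.
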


\begin{proof}
In this proof we use Lemma \ref{pachpa} replacing $a_{i}=b_{i}=\mu _{i}$,
$\left( i=1,\ldots ,n-1\right) $ and $a_{n}=b_{n}=\phi $. Moreover, $t_{i}=%
\frac{1}{n},$ $\left( i=1,\ldots ,n\right) $. Since,%
\begin{equation*}
\mu _{n-1}\leq a_{i}\leq \mu _{1}\quad \left( i=1,\ldots ,n\right)
\end{equation*}%
we have, $a=b=\mu _{n-1}$ and $A=B=\mu _{1}$. Replacing in the
inequality (\ref{numerique2}) we obtain
\begin{equation*}
\left\vert \frac{1}{n}\left( \sum\limits_{\substack{i=1}}^{n-1}\mu _{i}^{2}+\phi
^{2}\right) -\frac{1}{n^{2}}\left( \sum\limits_{\substack{i=1}}^{n-1}\mu _{i}+\phi
\right) ^{2}\right\vert \leq \frac{1}{4}\left( \mu _{1}-\mu _{n-1}\right)
^{2}.
\end{equation*}%
Using identity $1.\ $in Lemma \ref{grapheq}
\begin{equation*}
\frac{4}{n^{2}}\left\vert n\left( 2m+Z_{g}(G)+\phi
^{2}\right) -\left( 2m+\phi \right) ^{2}\right\vert \leq \left( \mu
_{1}-\mu _{n-1}\right) ^{2}
\end{equation*}%
$\Rightarrow $%
\begin{equation*}
\frac{4}{n^{2}}\left\vert n\left( 2m+Z_{g}(G)+\phi
^{2}\right) -4m^{2}-4m\phi -\phi ^{2}\right\vert \leq \left( \mu
_{1}-\mu _{n-1}\right) ^{2}
\end{equation*}%
$\Rightarrow $%
\begin{equation*}
\frac{4}{n^{2}}\left\vert n Z_{g}(G)+2mn+\left(
n-1\right) \phi ^{2}-4m\left( m+\phi \right) \right\vert \leq \left( \mu
_{1}-\mu _{n-1}\right) ^{2}.
\end{equation*}
\end{proof}

\section{An Example}
In the next example, a computational experiment is presented in order to approach the Laplacian spread of $G$. In each column, we use different parameters, namely:
\begin{enumerate}
\item algebraic connectivity;
\item edge density of a set $X$;
\item minimum vertex degree,
\end{enumerate}
and we test the lower bounds presented at Corollary \ref{independence copy(1)} , Theorem  \ref{first},  Theorem \ref{Th2}, Theorem \ref{Th1_1} and  Theorem \ref{Th1_2}.

 In this example, it can be seen that the edge density gives a better approach to algebraic connectivity compared with the minimum vertex degree $\delta$. In consequence, better results are obtained in the final lower bounds for the Laplacian spread when we replace the parameter $\phi$ by the edge density rather than the minimum degree.

\begin{example}
Let $X=\left\{ 1,2,3\right\} \subseteq V\left( G\right) $, where $G$ is the
graph at Figure 1. For $G$ we have $a\left( G\right) =1.75,\ \delta
\left( G\right) =3,\ \rho _{G}\left( X\right) =2,\ S_{L}\left( G\right) =5.8$.

\begin{figure}

\begin{center}
 \begin{tikzpicture}[scale=0.4]


 \draw  (-3,4.5) node{$ G$:};
  \draw[fill]  (0,2) circle (0.3cm);
  \draw[fill]  (0,-2) circle (0.3cm);
  \draw[fill]  (2,0) circle (0.3cm);
 \draw[fill]  (14,4) circle (0.3cm);
 \draw[fill]  (10,2) circle (0.3cm);
 \draw[fill]  (18,2) circle (0.3cm);
 \draw[fill]  (10,-1) circle (0.3cm);
 \draw[fill]  (18,-1) circle (0.3cm);

 \draw[fill]  (14,-4) circle (0.3cm);

 \draw  (0,3) node{$1$};
 \draw (2,1) node{$2$};
  \draw (0,-3) node{$3$};
  \draw (9,2) node{$9$};
 \draw (19,2) node{$5$};
 \draw  (9,0) node{$8$};
 \draw (19,0) node{$6$};
  \draw  (14,5) node{$4$};
  \draw  (14,-5) node{$7$};
%
\draw (10,2) --(10,-1);
\draw (18,2) --(18,-1);
\draw (10,2)--(14,4);
\draw(10,-1)--(14,-4);
\draw(18,-1)--(14,-4);
\draw(18,2)--(14,4);
\draw (10,2) --(18,2);
\draw (10,-1) --(18,-1);
\draw (10,2) --(18,-1);
\draw (18,2) --(10,-1);
\draw (14,4) --(14,-4);

\draw (14,4) --(10,-1);
\draw (14,4) --(18,-1);

\draw (14,-4) --(10,2);
\draw (14,-4) --(18,2);

\draw (0,2) --(0,-2);
\draw (2,0) --(0,-2);
\draw (0,2) --(2,0);

\draw (0,2) --(14,4);
\draw (0,2) --(10,2);
\draw (2,0) --(18,2);
\draw (0,-2) --(14,-4);

 \end{tikzpicture}

  \end{center}
 \caption{Graph $G$}
  \label{ex:example1 Graph $G$}

 \end{figure}

 \begin{equation*}
\begin{tabular}{|c|c|c|c|}
\hline
$\phi $ & $a\left( G\right) $ & $\rho _{G}\left( X\right) $ & $\delta \left(
G\right) $ \\ \hline
{\tiny Corollary \ref{independence copy(1)}} & $2$ & $2$ & $2$ \\ \hline
{\tiny Th \ref{first}} & $3.2313$ & $3.2313$ & $3.2313$ \\ \hline
{\tiny Th \ref{Th1_1}} & $3.6636$ & $3.6316$ & $3.4762$ \\ \hline
{\tiny Th \ref{Th2}} & $3.8730$ & $3.8730$ & $3.8730$ \\ \hline
{\tiny Th \ref{Th1_2}} & $4.3477$ & $4.2630$ & $3.9752$ \\ \hline
\end{tabular}%
\end{equation*}
\end{example}

 \newpage

\textbf{Acknowledgments}. Enide Andrade was supported in part by the Portuguese Foundation for Science and Technology (FCT-Funda\c c\~ao para a Ci\^encia e a Tecnologia), through CIDMA - Center for Research and Development in Mathematics and Applications, within project UID/MAT/04106/2013.
M. Robbiano was supported by
project VRIDT-UCN 2014 N. 220202-10301403 and J. Rodr\'{\i}guez was
supported by CONICYT-PCHA/Doctorado Nacional/2015-21150477. M.A.A. de Freitas was partially suported by CNPQ - Brazilian Council for Scientific and Technological Development (grant 308811/2014-3)  and by FAPERJ - Foundation for Support of Research in
the State of Rio de Janeiro (grant E-26/201.536/2014).

\end{document}